\theoremstyle{definition}
\newtheorem{theorem}			     {Theorem}		[section]
\newtheorem{proposition}  [theorem]	 {Proposition}	
\newtheorem{lemma}	      [theorem]  {Lemma}		
\newtheorem{remark} 	  [theorem]  {Remark}
\newtheorem{corollary}	  [theorem]  {Corollary}
\title{Double groupoids and $2$-groupoids in regular Mal'tsev categories}
\author[a]{Nadja Egner}
\author[a]{Marino Gran}
\affil[a]{\small\textit{Institut de Recherche en Math\'ematique et Physique, Universit\'e catholique de Louvain, Chemin du Cyclotron 2, 1348 Louvain-la-Neuve, Belgium}}
\affil[ ]{nadja.egner@uclouvain.be, marino.gran@uclouvain.be}
\date{}
\begin{document}

\maketitle

\begin{center}
	\textit{Dedicated to Robert Paré at the occasion of his 80th birthday}
\end{center}

\vspace{0.5cm}

\begin{abstract}
	We prove that the category 2-$ \mathrm{Grpd}(\mathscr{C}) $ of internal 2-groupoids is a Birkhoff subcategory of the category $ \mathrm{Grpd}^2(\mathscr{C}) $  of double groupoids in a regular Mal'tsev category $\mathscr{C}$ with finite colimits, and we provide a simple description of the reflector. In particular, when $\mathscr{C}$ is a Mal'tsev variety of universal algebras, the category 2-$ \mathrm{Grpd}(\mathscr{C}) $  is also a Mal'tsev variety, of which we describe the corresponding algebraic theory. When $\mathscr{C}$ is a naturally Mal'tsev category, the reflector from $ \mathrm{Grpd}^2(\mathscr{C}) $ to 2-$ \mathrm{Grpd}(\mathscr{C}) $ has an additional property related to the commutator of equivalence relations. We prove that the category 2-$ \mathrm{Grpd}(\mathscr{C}) $ is semi-abelian when $\mathscr{C}$ is semi-abelian, and then provide sufficient conditions for 2-$ \mathrm{Grpd}(\mathscr{C}) $ to be action representable.
	
	\small\textit{Keywords}: internal 2-groupoid, double groupoid, Mal'tsev variety, Mal'tsev category, semi-abelian category, action representable category, reflective subcategory
	
	\small\textit{2020 Mathematics Subject Classification}: 18E08, 18E13, 18B40, 18A40, 08B05
\end{abstract}

\section*{Introduction}

During the last four decades, internal groupoids in the categories of groups, Lie algebras, commutative algebras and more  general varieties of universal algebras have been shown to form very rich algebraic categories. They have strong connections with commutator theory (see \cite{CPP, Ja-Pe, Pedicchio, Gran-Rosicky, BGJ}, for instance) and are also of interest in homotopical algebra (see \cite{Loday, Porter, Conduche, Everaert-K-T}, for instance).
An internal groupoid in the category $\mathsf{Grp}$ of groups can be equivalently presented as a diagram
\begin{equation}\label{reflexive-graph}
	\begin{tikzcd}
		C_1 \arrow[r, shift left=2, "d"] \arrow[r, shift right=2, "c"'] &C_0 \arrow[l, "e" description]
	\end{tikzcd}
\end{equation}
in $\mathsf{Grp}$ such that $de = 1_{C_0}=ce$, so that it is a reflexive graph, and the kernels $\mathsf{Ker} (d)$ and $\mathsf{Ker}(c)$ of the ``domain'' and ``codomain'' group homomorphisms $d$ and $c$ have trivial commutator: $[\mathsf{Ker}(d), \mathsf{Ker}(c)]= \{1\}$ \cite{Loday}. The  ``composition'' of the internal groupoid is then a group homomorphism $ m: C_1\times_{C_0} C_1\to C_1 $, where $ C_1\times_{C_0} C_1 $ represents the group of pairs of ``composable arrows'', and is defined, for any pair $(f,g) \in C_1\times_{C_0} C_1$, by $m(f,g) = f \cdot {1_Y}^{-1} \cdot g$, where $\cdot$ represents the  multiplication in the ``group of arrows'' $C_1$ and $ 1_Y=e(Y) $, where $ Y=c(f)=d(g) $. A similar description of the category of internal groupoids is given in Mal'tsev varieties \cite{Ja-Pe}, which are the varieties of universal algebras whose theory contains a ternary term $p(x,y,z)$ satisfying the identities $p(x,y,y)=x$ and $p(x,x,y)=y$ \cite{Smith}. In particular any variety whose theory contains the operations and the identities of the theory of groups has such a Mal'tsev term, since it contains the ternary term $p(x,y,z)= x \cdot y^{-1} \cdot z$. In addition quasigroups, loops, Heyting algebras and $\mathsf{MV}$-algebras are Mal'tsev varieties. 

Many exactness properties of the category  $ \mathrm{Grpd}(\mathscr{C}) $ of internal groupoids in a Mal'tsev variety $\mathscr{C}$, and in more general algebraic categories, have 
been established in various articles (see \cite{gran:1999, Pedicchio, Gran-Rosicky, GRT, gran.gray:2021}, for instance).
The notion of regular Mal'tsev category \cite{CLP} generalizes the one of Mal'tsev variety, in the sense that the \emph{syntactic} property of the existence of the ternary Mal'tsev term $p(x,y,z)$ is replaced by the \emph{semantic} property that any internal reflexive relation in it is necessarily an equivalence relation. Besides all the varietal examples recalled above, among regular Mal'tsev categories we find the categories of topological groups, Banach spaces,
$\mathbb{C}^*$-algebras, 
torsion-free (abelian) groups,  cocommutative Hopf algebras over a field,  the dual of the category of sets, any abelian category, and many more (see \cite{BGJ} and the references therein).

In the present work we explore the properties of the categories $ \mathrm{Grpd}^2(\mathscr{C}) $ of internal double groupoids and 2-$ \mathrm{Grpd}(\mathscr{C}) $ of $2$-groupoids in any finitely cocomplete regular Mal'tsev category $\mathscr{C}$. Recall that a double groupoid is simply an internal groupoid in the category $\mathrm{Grpd}(\mathscr{C})$. It has an underlying double reflexive graph that we depict as  
\begin{equation}\label{Double-reflexive}
	\begin{tikzcd}
		C^1_1 \arrow[d, shift right=3, "d_1"'] \arrow[d, shift left=3, "c_1"] \arrow[r, shift left=3, "d^1"] \arrow[r, shift right=3, "c^1"']  &C^1_0 \arrow[d, shift right=3, "d_0"'] \arrow[d, shift left=3, "c_0"] \arrow[l, "e^1" description] \\
		C^0_1 \arrow[u, "e_1" description] \arrow[r, shift left=3, "d^0"] \arrow[r, shift right=3, "c^0"'] &{C^0_0. }\arrow[u, "e_0" description] \arrow[l, "e^0" description]
	\end{tikzcd}
\end{equation}
Such an internal groupoid is a $2$-groupoid precisely when the morphism
$ e^0 $ is an isomorphism, this property expressing the fact that the ``vertical structure'' of the double groupoid is discrete.

We prove that the category 2$\textrm{-Grpd}(\mathscr{C})$
of $2$-groupoids  in $\mathscr{C}$
is a reflective  subcategory of the category
$\mathrm{Grpd}^2(\mathscr{C}) $ of double groupoids in $\mathscr{C}$, whenever $\mathscr{C}$ is a regular Mal'tsev category with finite colimits. The left adjoint $ \mathsf{F}:\mathrm{Grpd}^2(\mathscr{C})\to 2\textrm{-Grpd}(\mathscr{C}) $ of the forgetful functor  $ \mathsf{U}: 2\textrm{-Grpd} \to \mathrm{Grpd}^2(\mathscr{C}) $ is quite simple to describe, thanks to the property that the category $ \mathrm{Grpd}(\mathscr{C}) $ is closed in the category $ \mathrm{RG}(\mathscr{C}) $ of reflexive graphs in $\mathscr{C}$ under regular quotients. It is actually a Birkhoff subcategory (Theorem \ref{Birkhoff}), this latter property meaning that 2$\textrm{-Grpd}(\mathscr{C})$ is also closed in $\mathrm{Grpd}^2(\mathscr{C}) $ under subobjects and regular quotients. In particular, when $\mathscr{C}$ is a Mal'tsev variety, this result implies that 2$\textrm{-Grpd}(\mathscr{C})$ is a subvariety of the Mal'tsev variety $\mathrm{Grpd}^2(\mathscr{C}) $, for which Corollary \ref{theory-variety} provides a precise description of the algebraic theory. In the next section we investigate the same adjunction in the context of naturally Mal'tsev categories \cite{Johnstone}, where the unit components of the previously considered adjunction turn out to have an additional property (Proposition \ref{units-NM}). In Section \ref{Action-R} we restrict ourselves to the case where the base category $\mathscr{C}$ is semi-abelian \cite{JMT}, and we investigate the problem of establishing a sufficient condition for the category 2$\textrm{-Grpd}(\mathscr{C})$ to be action representable in the sense of \cite{BJK}. By using the recent result in \cite{gran.gray:2021} concerning the action representability of the category $\mathrm{Grpd}^2(\mathscr{C}) $, we deduce that 2-$\mathrm{Grpd}(\mathscr{C}) $ is semi-abelian, action representable, algebraically coherent \cite{cigoli.gray.vanderlinden:2015} with normalizers \cite{Gray} 
whenever the base category $\mathscr{C}$ has all these properties (Theorem \ref{main-AR}). This essentially depends on the fact that 2-$\mathrm{Grpd}(\mathscr{C})$ is not only a Birkhoff subcategory of $\mathrm{Grpd}^2(\mathscr{C}) $, but it is also coreflective (Lemma \ref{prop:2-Grpd(C)coreflective}).

\textsc{Acknowledgements.} The authors thank Pierre-Alain Jacqmin and George Janelidze for some very useful discussions on the subject of this article. The authors also thank the anonymous referees, in particular for the comment that led to Remark~\ref{rem:alternativeproof}. The first author's research is funded by a FNRS doctoral grant of the Communaut\'{e} fran\c{c}aise de Belgique. The second author's research was supported by the Fonds de la Recherche Scientifique - FNRS under Grant CDR No. J.0080.23.

\section{Preliminaries}

In this section, we will recall the notions of internal groupoid, Mal'tsev category, semi-abelian category and Birkhoff subcategory, and some of their properties. 

\subsection*{Internal groupoids}

Throughout the remainder of the paper, $ \mathscr{C} $ will always be a finitely complete category. An internal reflexive graph \eqref{reflexive-graph} in $ \mathscr{C} $ 
has an internal category structure if there exists a ``composition'' morphism $ m: C_1\times_{C_0} C_1\to C_1 $, where $ C_1\times_{C_0} C_1 $ is the object part of the pullback of the ``codomain'' morphism $ c $  along the ``domain'' morphism $ d $ as displayed in the diagram 
\begin{equation*}
	\begin{tikzcd}
		C_1\times_{C_0} C_1 \arrow[r, "p_2"] \arrow[d, "p_1"'] &C_1 \arrow[d, "d"]\\
		C_1 \arrow[r, "c"'] &C_0,
	\end{tikzcd}
\end{equation*}
that satisfies the usual axioms(see \cite{MacLane}). An internal category, i.e. a reflexive graph with a fixed internal category structure, is called an internal groupoid if, in addition, there exists an ``inverse'' morphism $ i: C_1\to C_1 $ satisfying the usual axioms. One can show that, if  $ i $ exists, it is necessarily unique. 

A morphism from a reflexive graph $ \mathbb{C} $ to another reflexive graph $ \mathbb{D} $ is given by a pair $ (f_0,f_1) $ of morphisms in $ \mathscr{C} $ as displayed in the diagram 
\begin{equation}\label{reflexive-graph-morphism}
	\begin{tikzcd}
		C_1 \arrow[d, "f_1"'] \arrow[r, shift left=2, "d"] \arrow[r, shift right=2, "c"'] &C_0 \arrow[d, "f_0"] \arrow[l, "e" description]\\
		D_1 \arrow[r, shift left=2, "d"] \arrow[r, shift right=2, "c"'] &D_0, \arrow[l, "e" description]
	\end{tikzcd}
\end{equation}
such that $ f_0d=df_1 $, $ f_0c=cf_1 $ and $ f_1e=ef_0$. An internal functor from an internal category $ \mathbb{C} $ to another internal category $ \mathbb{D} $ is a morphism of the underlying reflexive graphs of $ \mathbb{C} $ and $ \mathbb{D} $ that also preserves the composition maps in the usual sense. 

We denote by $ \mathrm{RG}(\mathscr{C}) $ the category whose objects are the reflexive graphs in $ \mathscr{C} $ and whose morphisms are the morphisms of reflexive graphs just defined. We denote by $ \mathrm{Cat}(\mathscr{C}) $ the category whose objects are the internal categories in $ \mathscr{C} $ and whose morphisms are the internal functors. The full subcategory of $ \mathrm{Cat}(\mathscr{C}) $ whose objects are the internal groupoids in $ \mathscr{C} $ is denoted by $ \mathrm{Grpd}(\mathscr{C}) $. It is well-known that an internal functor between two internal groupoids automatically preserves the inverse morphisms. 

When $\mathscr{C}$ is finitely complete, so is the category 
$ \mathrm{Grpd}(\mathscr{C}) $, and its limits are computed ``levelwise''.
One then defines the category  $ \mathrm{Grpd}^2(\mathscr{C}):= \mathrm{Grpd}(\mathrm{Grpd}(\mathscr{C})) $ of internal double groupoids in $ \mathscr{C} $.  Every double groupoid $ \mathbb{C} $ in $ \mathscr{C} $ has an underlying double reflexive graph \eqref{Double-reflexive}. 
We denote by $ 2\textrm{-Grpd}(\mathscr{C}) $ the full subcategory of $ \mathrm{Grpd}^2(\mathscr{C}) $ whose objects are the internal $2$-groupoids, these latter being  characterized by the property that $e^0$ is an isomorphism. When this is the case, one clearly has that $ d^0=c^0=(e^0)^{-1} $.

\subsection*{Mal'tsev categories and semi-abelian categories}

As already mentioned in the Introduction, a Mal'tsev category is a finitely complete category such that any internal reflexive relation in it is an equivalence relation. One of their interesting properties is that any reflexive graph in a Mal'tsev category admits at most one internal category structure, and that any internal category is automatically an internal groupoid \cite{CPP}. This means that the categories $ \mathrm{Cat}(\mathscr{C}) $ and $ \mathrm{Grpd}(\mathscr{C}) $ are isomorphic. Moreover, the category $\mathrm{Grpd}(\mathscr{C}) $ of internal groupoids in a Mal'tsev category $ \mathscr{C} $ is a full subcategory of $ \mathrm{RG}(\mathscr{C}) $ since any morphism of reflexive graphs between two internal groupoids automatically  preserves the composition \cite{CPP}.

Let us recall that a regular category $ \mathscr{C} $ is a finitely complete category that has coequalizers of kernel pairs and whose regular epimorphisms are pullback stable. This implies that any morphism in $\mathscr{C}$ can be factorized as a regular epimorphism followed by a monomorphism, and that these factorizations are pullback stable. If $ \mathscr{C} $ is a regular Mal'tsev category, then $ \mathrm{Grpd}(\mathscr{C}) $ is a regular Mal'tsev category \cite{gran:1999}. 
Indeed, this mainly follows from the observation that a morphism $ (f_0,f_1)$ as in diagram \eqref{reflexive-graph-morphism}, where $\mathbb C$ and $\mathbb D$ now represent the reflexive graphs underlying two internal groupoids, is a regular epimorphism in $ \mathrm{Grpd}(\mathscr{C}) $ if and only if it is so in $ \mathrm{RG}(\mathscr{C}) $, i.e., both $f_0$ and $f_1 $ are regular epimorphisms in $ \mathscr{C} $. As a consequence, the category $ \mathrm{Grpd}(\mathscr{C}) $ is closed in $ \mathrm{RG}(\mathscr{C}) $ under regular quotients, meaning that, if $ (f_0,f_1) $ is a regular epimorphism of reflexive graphs whose domain is an internal groupoid, then its codomain is also an internal groupoid. Recall that an exact category in the sense of Barr \cite{Barr} is a regular category in which every  equivalence relation is effective, i.e., it is a kernel pair. If $\mathscr{C}$ is an exact Mal'tsev category, then so is the category $ \mathrm{Grpd}(\mathscr{C}) $. 

A semi-abelian category is a category that is pointed, exact, protomodular and has binary coproducts. In this context protomodularity is  equivalent to the validity of the Split Short Five Lemma, i.e., given a diagram
\begin{equation*}
	\begin{tikzcd}
		A \arrow[d, equal] \arrow[r, "k"] & B \arrow[d, "b"'] \arrow[r, shift left=1, "p"] & C \arrow[d, equal] \arrow[l, shift left=1, "s"]\\
		A \arrow[r, "k'"'] & B' \arrow[r, shift left=1, "p'"] & C, \arrow[l, shift left=1, "s'"]
	\end{tikzcd}
\end{equation*}
where $ ps=1_C $, $ p's'=1_{C} $, $ k=\mathrm{ker}(p) $, $ k'=\mathrm{ker}(p') $, $ bk=k' $, $ p'b=p $ and $ bs=s' $, then $ b $ is an isomorphism.  Any semi-abelian category is a finitely cocomplete and Mal'tsev category. Examples of semi-abelian categories are given by the categories of groups, rings, Lie algebras over a commutative ring, cocommutative Hopf algebras over a field, or $ \mathbb{C}^*$-algebras. If $ \mathscr{C} $ is semi-abelian, then $ \mathrm{Grpd}(\mathscr{C}) $ is also semi-abelian. 

\subsection*{Birkhoff subcategories}

In the following, let $ \mathscr{X} $ be a replete, i.e. closed under isomorphisms, full reflective subcategory of a category $ \mathscr{C} $. It is called a Birkhoff subcategory of $ \mathscr{C} $ if it is closed both under subobjects and regular quotients. If $ \mathscr{C} $ is a regular category, then $ \mathscr{X} $ is closed under subobjects in $ \mathscr{C} $ if and only if it is regular epi-reflective, i.e., each component of the unit of the reflection is a regular epimorphism in $ \mathscr{C} $. If $ \mathscr{C} $ is a regular category and $ \mathscr{X} $ is closed under subobjects in $ \mathscr{C} $, then $ \mathscr{X} $ is regular. In this case a morphism in $ \mathscr{X} $ is a regular epimorphism (monomorphism) if and only if it is a regular epimorphism (monomorphism) in $ \mathscr{C} $. If $ \mathscr{C} $ is exact and $ \mathscr{X} $ is a Birkhoff subcategory of $ \mathscr{C} $, then $ \mathscr{X} $ is exact. 

The terminology ``Birkhoff subcategory'' is justified by the classical theorem due to G. Birkhoff, asserting that a class of universal algebras $\mathscr{A}$ is a subvariety of a variety $\mathscr{B}$ of universal algebras if and only if it is closed in $\mathscr{B}$ under products, subalgebras and homomorphic images. Accordingly,  the Birkhoff subcategories of a given variety $\mathscr{B}$ are exactly its subvarieties. 

We note that a subcategory $ \mathscr{X} $ as above is a regular Mal'tsev category whenever $ \mathscr{C} $ is and $ \mathscr{X} $ is closed under subobjects in $ \mathscr{C} $. It is exact Mal'tsev (resp., semi-abelian) whenever $ \mathscr{C} $ is and $ \mathscr{X} $ is a Birkhoff subcategory of $ \mathscr{C} $. Moreover, if $ \mathscr{C} $ is a regular Mal'tsev category with coequalizers, then $ \mathrm{Grpd}(\mathscr{C}) $ is a Birkhoff subcategory of $ \mathrm{RG}(\mathscr{C}) $.

\section{The Birkhoff subcategory of internal 2-groupoids}

\begin{theorem}\label{Birkhoff}
	Let $ \mathscr{C} $ be a finitely cocomplete regular Mal'tsev category. Then $ 2\text{-}\mathrm{Grpd}(\mathscr{C}) $ is a Birkhoff subcategory of $ \mathrm{Grpd}^2(\mathscr{C}) $.  
\end{theorem}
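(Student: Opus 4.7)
The plan is to verify the three defining properties of a Birkhoff subcategory: reflectivity, closure under subobjects, and closure under regular quotients. The unifying technical input is that finite limits in $\mathrm{Grpd}^2(\mathscr{C})$ are computed componentwise in $\mathscr{C}$, and, iterating the preliminary fact that $\mathrm{Grpd}(\mathscr{C})$ is Birkhoff in $\mathrm{RG}(\mathscr{C})$ with regular epimorphisms detected componentwise, the same holds for regular epimorphisms in $\mathrm{Grpd}^2(\mathscr{C})$.

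For \emph{subobject closure}, let $f = (f^i_j): \mathbb{D} \hookrightarrow \mathbb{C}$ be a monomorphism with $\mathbb{C}$ a 2-groupoid, so each $f^i_j$ is monic in $\mathscr{C}$. The morphism $e^0_D$ is always split monic with retraction $d^0_D$, and naturality combined with $e^0_C d^0_C = 1_{C^0_1}$ gives
\[ f^0_1 \cdot (e^0_D d^0_D) = e^0_C f^0_0 d^0_D = e^0_C d^0_C f^0_1 = f^0_1. \]
Cancelling the mono $f^0_1$ yields $e^0_D d^0_D = 1_{D^0_1}$, so $e^0_D$ is an isomorphism. For \emph{regular-quotient closure}, if $f: \mathbb{C} \twoheadrightarrow \mathbb{D}$ is a regular epi with $\mathbb{C}$ a 2-groupoid, each $f^i_j$ is a regular epi in $\mathscr{C}$, and the naturality square $e^0_D f^0_0 = f^0_1 e^0_C$ has right-hand side a regular epi (iso followed by regular epi). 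Hence $e^0_D f^0_0$ is a regular epi, so $e^0_D$ is a regular epi, and being also split mono via $d^0_D$, it is an isomorphism.

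For \emph{reflectivity}, given a double groupoid $\mathbb{C}$, set $q: C^0_0 \twoheadrightarrow Q := \mathrm{coeq}(d^0, c^0)$ in $\mathscr{C}$; the Mal'tsev property makes the image of $(d^0, c^0): C^0_1 \to C^0_0 \times C^0_0$ an equivalence relation, so $q$ is a quotient by a congruence. Define $\mathsf{F}(\mathbb{C})$ to have $F^0_0 = F^0_1 = Q$ (with identity structural maps on the bottom row), set $F^1_0 := \mathrm{coeq}(e_0 d^0, e_0 c^0)$ so that $e_0$ descends to $e_0^F: Q \to F^1_0$, and analogously define $F^1_1$ as the coequalizer of a parallel pair forcing both $e_1$ and $e^1$ to descend. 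The standard double-groupoid identities --- $e^1 e_0 = e_1 e^0$, $d^1 e_1 = e_0 d^0$, $c^1 e_1 = e_0 c^0$, together with the compatibilities among the face maps --- ensure that all remaining structure morphisms descend to $\mathsf{F}(\mathbb{C})$, a double reflexive graph with discrete bottom row. The unit $\eta_{\mathbb{C}}: \mathbb{C} \to \mathsf{F}(\mathbb{C})$ is componentwise a regular epi in $\mathscr{C}$ (its components are either coequalizers by construction, or compositions of regular epis), so by the preliminary regular-quotient closure of $\mathrm{Grpd}(\mathscr{C}) \subseteq \mathrm{RG}(\mathscr{C})$ applied at both levels, $\mathsf{F}(\mathbb{C})$ is a double groupoid and therefore a 2-groupoid. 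The universal property of $\eta_{\mathbb{C}}$ then follows from the universal properties of the coequalizers used in its construction.

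The main obstacle is the coherent propagation of $q$ to the upper slots: one must select the coequalizers defining $F^1_0$ and $F^1_1$ so that every structural map of $\mathbb{C}$ descends, and verify that the resulting $\eta_{\mathbb{C}}$ is a componentwise regular epi. Both points rely on the Mal'tsev property (for the equivalence-relation upgrade of $(d^0, c^0)$ on $C^0_0$) and on the stability of regular epimorphisms under composition.
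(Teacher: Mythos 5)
Your plan follows essentially the same route as the paper: the reflector is built by coequalizing $d^0,c^0$ at the bottom level and propagating the quotient upward (your $\mathrm{coeq}(e_0d^0,e_0c^0)$ has the same universal property as the paper's pushout of the unit along $e_0$, since a map $a$ out of $C^1_0$ coequalizes that pair exactly when $ae_0$ factors through the coequalizer of $d^0,c^0$), with the double-groupoid structure on the quotient supplied, as in the paper, by the closure of $\mathrm{Grpd}(\mathscr{C})$ under regular quotients in $\mathrm{RG}(\mathscr{C})$, and with the same argument for closure under regular quotients. Your direct mono-cancellation argument for subobject closure is a small, correct variant of the paper's route via regular-epi-reflectivity of the unit.
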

\begin{proof}
	Let us define the reflector $ \mathsf{F}:\mathrm{Grpd}^2(\mathscr{C})\to 2\textrm{-Grpd}(\mathscr{C}) $ as follows. For a double groupoid $ \mathbb{C} $ in $ \mathscr{C} $, we define $ \mathsf{F}(\mathbb{C}) $ as the $2$-groupoid $ \mathbb{F} $ in the front part of the following diagram: 
	\begin{equation}\label{reflection}
		\begin{tikzcd}
			{C^1_1} && {C^1_0} \\
			\\
			{C^0_1} && {C^0_0} \\
			&& {F^1_1} && {F^1_0} \\
			\\
			&& {F^0_0} && {F^0_0} 
			\arrow["{d^1}", shift left=2, from=1-1, to=1-3]
			\arrow["{c^1}"', shift right=2, from=1-1, to=1-3]
			\arrow["{e^1}"{description}, from=1-3, to=1-1]
			\arrow["{c_1}", shift left=2, from=1-1, to=3-1]
			\arrow["{d_1}"', shift right=2, from=1-1, to=3-1]
			\arrow["{e_1}"{description}, from=3-1, to=1-1]
			\arrow["{d^0}", shift left=2, from=3-1, to=3-3]
			\arrow["{c^0}"', shift right=2, from=3-1, to=3-3]
			\arrow["{e^0}"{description}, from=3-3, to=3-1]
			\arrow["{c_0}", shift left=2, from=1-3, to=3-3]
			\arrow["{d_0}"', shift right=2, from=1-3, to=3-3]
			\arrow["{e_0}"{description}, from=3-3, to=1-3]
			\arrow["{\delta^1}", shift left=2, from=4-3, to=4-5]
			\arrow["{\gamma^1}"', shift right=2, from=4-3, to=4-5]
			\arrow["{\gamma_1}", shift left=2, from=4-3, to=6-3]
			\arrow["{\delta_1}"', shift right=2, from=4-3, to=6-3]
			\arrow["{1_{F^0_0}}", shift left=2, from=6-3, to=6-5]
			\arrow["{1_{F^0_0}}"', shift right=2, from=6-3, to=6-5]
			\arrow["{\gamma_0}", shift left=2, from=4-5, to=6-5]
			\arrow["{\delta_0}"', shift right=2, from=4-5, to=6-5]
			\arrow["{\varepsilon^1}"{description}, from=4-5, to=4-3]
			\arrow["{\varepsilon_1}"{description}, from=6-3, to=4-3]
			\arrow["{1_{F^0_0}}"{description}, from=6-5, to=6-3]
			\arrow["{\varepsilon_0}"{description}, from=6-5, to=4-5]
			\arrow["{(\eta_\mathbb{C})^1_1}"', very near end, from=1-1, to=4-3]
			\arrow["{(\eta_\mathbb{C})^1_0}", from=1-3, to=4-5]
			\arrow["{(\eta_\mathbb{C})^0_1}"', from=3-1, to=6-3]
			\arrow["{(\eta_\mathbb{C})^0_0}",  very near start, from=3-3, to=6-5]
		\end{tikzcd}
	\end{equation}
	Here $ (F^0_0,(\eta_\mathbb{C})^0_0) $ is the coequalizer of $ d^0 $ and $ c^0 $. We set $ (\eta_\mathbb{C})^0_1:=(\eta_\mathbb{C})^0_0d^0=(\eta_\mathbb{C})^0_0c^0 $. $ (F^1_0, (\eta_\mathbb{C})^1_0, \varepsilon_0) $ is the pushout of $ (\eta_\mathbb{C})^0_0 $ along $ e_0 $. The morphisms $ \delta_0 $ and $ \gamma_0 $ are the unique ones such that $ \delta_0(\eta_\mathbb{C})^1_0=(\eta_\mathbb{C})^0_0d_0 $ and $ \delta_0\varepsilon_0=1_{F^0_0} $, and $ \gamma_0(\eta_\mathbb{C})^1_0=(\eta_\mathbb{C})^0_0c_0 $ and $ \gamma_0\varepsilon_0=1_{F^0_0} $, respectively, which are induced by the universal property of the pushout $ F^1_0 $. Similarly, $ (F^1_1,(\eta_\mathbb{C})^1_1,\varepsilon_1) $ is the pushout of $ (\eta_\mathbb{C})^0_1 $ along $ e_1 $.  The morphism $ \varepsilon^1 $ is the unique one such that $ \varepsilon^1(\eta_\mathbb{C})^1_0 =(\eta_\mathbb{C})^1_1  e^1 $ and $ \varepsilon^1\varepsilon_0 = \varepsilon_1 $, induced by the universal property of the pushout $ F^1_0 $. The morphisms $ \delta^1 $ and $ \gamma^1 $ are the unique ones such that $ \delta^1(\eta_\mathbb{C})^1_1=(\eta_\mathbb{C})^1_0d^1 $ and $ \delta^1\varepsilon_1=\varepsilon_0 $, and $ \gamma^1(\eta_\mathbb{C})^1_1=(\eta_\mathbb{C})^1_0c^1 $ and $ \gamma^1\varepsilon_1=\varepsilon_0 $, respectively, induced by the universal property of the pushout defining $ F^1_1 $. We set $ \delta_1:=\delta_0\delta^1 $ and $ \gamma_1:=\gamma_0\gamma^1 $.
	
	It is easily seen that the front part of the above diagram is a double reflexive graph $ \mathbb{F} $ in $ \mathscr{C} $. Since $\mathrm{Grpd}(\mathscr{C}) $ is closed under regular quotients in $ \mathrm{RG}(\mathscr{C}) $ when $ \mathscr{C} $ is a regular Mal'tsev category \cite{gran:1999}, it follows that $ \mathbb{F} $ is actually a double groupoid in $ \mathscr{C} $. Furthermore, one checks that $ \eta_\mathbb{C}: \mathbb{C}\to \mathbb{F} $ is a morphism of double reflexive graphs, hence of double groupoids, since $ \mathrm{Grpd}(\mathscr{C}) $ is a full subcategory of $ \mathrm{RG}(\mathscr{C}) $ when $ \mathscr{C} $ is a Mal'tsev category.
	
	Let now 
	\begin{equation*}
		\begin{tikzcd}
			{C^1_1} && {C^1_0} \\
			\\
			{C^0_1} && {C^0_0} \\
			&& {D^1_1} && {D^1_0} \\
			\\
			&& {D^0_1} && {D^0_0} 
			\arrow["{d^1}", shift left=2, from=1-1, to=1-3]
			\arrow["{c^1}"', shift right=2, from=1-1, to=1-3]
			\arrow["{e^1}"{description}, from=1-3, to=1-1]
			\arrow["{c_1}", shift left=2, from=1-1, to=3-1]
			\arrow["{d_1}"', shift right=2, from=1-1, to=3-1]
			\arrow["{e_1}"{description}, from=3-1, to=1-1]
			\arrow["{d^0}", shift left=2, from=3-1, to=3-3]
			\arrow["{c^0}"', shift right=2, from=3-1, to=3-3]
			\arrow["{e^0}"{description}, from=3-3, to=3-1]
			\arrow["{c_0}", shift left=2, from=1-3, to=3-3]
			\arrow["{d_0}"', shift right=2, from=1-3, to=3-3]
			\arrow["{e_0}"{description}, from=3-3, to=1-3]
			\arrow["{d^1}", shift left=2, from=4-3, to=4-5]
			\arrow["{c^1}"', shift right=2, from=4-3, to=4-5]
			\arrow["{d_1}", shift left=2, from=4-3, to=6-3]
			\arrow["{d_1}"', shift right=2, from=4-3, to=6-3]
			\arrow["{d^0}", shift left=2, from=6-3, to=6-5]
			\arrow["{c^0}"', shift right=2, from=6-3, to=6-5]
			\arrow["{c_0}", shift left=2, from=4-5, to=6-5]
			\arrow["{d_0}"', shift right=2, from=4-5, to=6-5]
			\arrow["{e^1}"{description}, from=4-5, to=4-3]
			\arrow["{e_1}"{description}, from=6-3, to=4-3]
			\arrow["{e^0}"{description}, from=6-5, to=6-3]
			\arrow["{e_0}"{description}, from=6-5, to=4-5]
			\arrow["{f^1_1}", very near end, from=1-1, to=4-3]
			\arrow["{f^1_0}"', from=1-3, to=4-5]
			\arrow["{f^0_1}", from=3-1, to=6-3]
			\arrow["{f^0_0}"',  very near start, from=3-3, to=6-5]
		\end{tikzcd}
	\end{equation*}
	represent a double functor $ f:\mathbb{C}\to\mathbb{D} $ between double groupoids $ \mathbb{C} $ and $ \mathbb{D} $. We define $ \mathsf{F}(f) $ as $ \varphi $ in the following diagram: 
	\begin{equation*}
		\begin{tikzcd}
			{F^1_1} && {F^1_0} \\
			\\
			{F^0_0} && {F^0_0} \\
			&& {G^1_1} && {G^1_0} \\
			\\
			&& {G^0_0} && {G^0_0} 
			\arrow["{\delta^1}", shift left=2, from=1-1, to=1-3]
			\arrow["{\gamma^1}"', shift right=2, from=1-1, to=1-3]
			\arrow["{\varepsilon^1}"{description}, from=1-3, to=1-1]
			\arrow["{\gamma_1}", shift left=2, from=1-1, to=3-1]
			\arrow["{\delta_1}"', shift right=2, from=1-1, to=3-1]
			\arrow["{\varepsilon_1}"{description}, from=3-1, to=1-1]
			\arrow["{1_{F^0_0}}", shift left=2, from=3-1, to=3-3]
			\arrow["{1_{F^0_0}}"', shift right=2, from=3-1, to=3-3]
			\arrow["{1_{F^0_0}}"{description}, from=3-3, to=3-1]
			\arrow["{\gamma_0}", shift left=2, from=1-3, to=3-3]
			\arrow["{\delta_0}"', shift right=2, from=1-3, to=3-3]
			\arrow["{\varepsilon_0}"{description}, from=3-3, to=1-3]
			\arrow["{\delta^1}", shift left=2, from=4-3, to=4-5]
			\arrow["{\gamma^1}"', shift right=2, from=4-3, to=4-5]
			\arrow["{\delta_1}", shift left=2, from=4-3, to=6-3]
			\arrow["{\delta_1}"', shift right=2, from=4-3, to=6-3]
			\arrow["{1_{G^0_0}}", shift left=2, from=6-3, to=6-5]
			\arrow["{1_{G^0_0}}"', shift right=2, from=6-3, to=6-5]
			\arrow["{\gamma_0}", shift left=2, from=4-5, to=6-5]
			\arrow["{\delta_0}"', shift right=2, from=4-5, to=6-5]
			\arrow["{\varepsilon^1}"{description}, from=4-5, to=4-3]
			\arrow["{\varepsilon_1}"{description}, from=6-3, to=4-3]
			\arrow["{1_{G^0_0}}"{description}, from=6-5, to=6-3]
			\arrow["{\varepsilon_0}"{description}, from=6-5, to=4-5]
			\arrow["{\varphi^1_1}"', very near end, from=1-1, to=4-3]
			\arrow["{\varphi^1_0}", from=1-3, to=4-5]
			\arrow["{\varphi^0_1}"', from=3-1, to=6-3]
			\arrow["{\varphi^0_0}",  very near start, from=3-3, to=6-5]
		\end{tikzcd}
	\end{equation*}
	Here $ \mathbb{F} $ and $ \mathbb{G} $ are $ \mathsf{F}(\mathbb{C}) $ and $ \mathsf{F}(\mathbb{D})$, respectively, that are both constructed as explained above. We define $ \varphi^0_0 $ to be the unique morphism such that $ \varphi^0_0(\eta_\mathbb{C})^0_0=(\eta_\mathbb{D})^0_0 f^0_0 $ induced by the universal property of the coequalizer $ (\eta_\mathbb{C})^0_0 \colon C^0_0 \rightarrow F^0_0 $. We set $ \varphi^0_1:=\varphi^0_0 $. We define $ \varphi^1_0 $ to be the unique morphism such that $ \varphi^1_0\varepsilon_0=\varepsilon_0\varphi^0_0 $ and $ \varphi^1_0(\eta_\mathbb{C})^1_0=(\eta_\mathbb{D})^1_0 f^1_0 $ induced by the universal property of the pushout defining $ G^1_0 $. Finally, we define $\varphi^1_1 $ to be the unique morphism such that $ \varphi^1_1 \varepsilon_1=\varepsilon_1 \varphi^0_1 $ and $ \varphi^1_1 (\eta_\mathbb{C})^1_1=(\eta_\mathbb{D})^1_1 f^1_1 $ induced by the universal property of the pushout defining $ G^1_1 $. 
	
	It is easily seen that $ \varphi $ is a double functor from $ \mathbb{F} $ to $ \mathbb{G} $. Moreover, $ \eta $ is a natural transformation from $ 1_{\mathrm{Grpd}^2(\mathscr{C})} $ to $ \mathsf{UF} $, where $ \mathsf{U} $ is the inclusion functor of $ 2\textrm{-Grpd}(\mathscr{C}) $ into $ \mathrm{Grpd}^2(\mathscr{C}) $. Note that $ \eta_\mathbb{C} $ is a regular epimorphism in $ \mathrm{Grpd}^2(\mathscr{C}) $ since $ (\eta_\mathbb{C})^i_j $ is a regular epimorphism in $ \mathscr{C} $ for all $ i,j\in\{0,1\} $. 
	
	Let now $ f:\mathbb{C}\to\mathbb{D} $ be a morphism in $ \mathrm{Grpd}^2(\mathscr{C})$, where $ \mathbb{D}\in2\text{-}\mathrm{Grpd}(\mathscr{C}) $, as depicted in the left-hand side of the following diagram: 
	\begin{equation*}
		\begin{tikzcd}
			&&&& {C^1_1} && {C^1_0} \\
			{D^1_1} && {D^1_0} \\
			&&&& {C^0_1} && {C^0_0} \\
			{D^0_0} && {D^0_0} &&&& {F^1_1} && {F^1_0} \\
			\\
			&&&&&& {F^0_0} && {F^0_0}
			\arrow["{d^1}", shift left=2, from=1-5, to=1-7]
			\arrow["{c^1}"', shift right=2, from=1-5, to=1-7]
			\arrow["{e^1}"{description}, from=1-7, to=1-5]
			\arrow["{c_1}", shift left=2, from=1-5, to=3-5]
			\arrow["{d_1}"', shift right=2, from=1-5, to=3-5]
			\arrow["{e_1}"{description}, from=3-5, to=1-5]
			\arrow["{d^0}", shift left=2, from=3-5, to=3-7]
			\arrow["{c^0}"', shift right=2, from=3-5, to=3-7]
			\arrow["{e^0}"{description}, from=3-7, to=3-5]
			\arrow["{c_0}", shift left=2, from=1-7, to=3-7]
			\arrow["{d_0}"', shift right=2, from=1-7, to=3-7]
			\arrow["{e_0}"{description}, from=3-7, to=1-7]
			\arrow["{\delta^1}", shift left=2, from=4-7, to=4-9]
			\arrow["{\gamma^1}"', shift right=2, from=4-7, to=4-9]
			\arrow["{\gamma_1}", shift left=2, from=4-7, to=6-7]
			\arrow["{\delta_1}"', shift right=2, from=4-7, to=6-7]
			\arrow["{1_{F^0_0}}", shift left=2, from=6-7, to=6-9]
			\arrow["{1_{F^0_0}}"', shift right=2, from=6-7, to=6-9]
			\arrow["{\gamma_0}", shift left=2, from=4-9, to=6-9]
			\arrow["{\delta_0}"', shift right=2, from=4-9, to=6-9]
			\arrow["{\varepsilon^1}"{description}, from=4-9, to=4-7]
			\arrow["{\varepsilon_1}"{description}, from=6-7, to=4-7]
			\arrow["{1_{F^0_0}}"{description}, from=6-9, to=6-7]
			\arrow["{\varepsilon_0}"{description}, from=6-9, to=4-9]
			\arrow["{(\eta_\mathbb{C})^1_1}"', very near end, from=1-5, to=4-7]
			\arrow["{(\eta_\mathbb{C})^1_0}", very near end, from=1-7, to=4-9]
			\arrow["{(\eta_\mathbb{C})^0_1}"',  very near end, from=3-5, to=6-7]
			\arrow["{(\eta_\mathbb{C})^0_0}"', near end, from=3-7, to=6-9]
			\arrow["{f^1_1}"', from=1-5, to=2-1]
			\arrow[dotted, "{g^1_1}", very near end, from=4-7, to=2-1]
			\arrow["{f^1_0}"', near end, from=1-7, to=2-3]
			\arrow[dotted, "{ g^1_0}", very near end, from=4-9, to=2-3]
			\arrow["{f^0_1}", near start, from=3-5, to=4-1]
			\arrow["{f^0_0}", near end, from=3-7, to=4-3]
			\arrow[dotted,"{g^0_1}", near end, from=6-7, to=4-1]
			\arrow[dotted, "{g^0_0}", very near end, from=6-9, to=4-3]
			\arrow["{e_0}"{description}, from=4-3, to=2-3]
			\arrow["{e^1}"{description}, from=2-3, to=2-1]
			\arrow["{e_1}"{description}, from=4-1, to=2-1]
			\arrow["{1_{D^0_0}}"{description}, from=4-3, to=4-1]
			\arrow["{c_1}", shift left=2, from=2-1, to=4-1]
			\arrow["{d_1}"', shift right=2, from=2-1, to=4-1]
			\arrow["{d^1}", shift left=2, from=2-1, to=2-3]
			\arrow["{c^1}"', shift right=2, from=2-1, to=2-3]
			\arrow["{c_0}", shift left=2, from=2-3, to=4-3]
			\arrow["{d_0}"', shift right=2, from=2-3, to=4-3]
			\arrow["{1_{D^0_0}}", shift left=2, from=4-1, to=4-3]
			\arrow["{1_{D^0_0}}"', shift right=2, from=4-1, to=4-3]
		\end{tikzcd}
	\end{equation*}
	We show that there exists a unique double functor $ g:\mathbb{F}\to\mathbb{D} $ such that $ g\eta_\mathbb{C}=f $. Since $ f^0_0d^0=f^0_1=f^0_0c^0 $, there exists a unique morphism $ g^0_0 $ such $ g^0_0(\eta_\mathbb{C})^0_0=f^0_0 $ by the universal property of the coequalizer $ F^0_0 $. We set $ g^0_1:=g^0_0 $. Since $ e_0g^0_0(\eta_\mathbb{C})^0_0=e_0f^0_0=f^1_0e_0 $, there exists a unique morphism $ g^1_0 $ such that $ g^1_0(\eta_\mathbb{C})^1_0=f^1_0 $ and $ g^1_0\varepsilon_0=e_0g^0_0 $ by the universal property of the pushout $ F^1_0 $. Since $$ e_1g^0_1(\eta_\mathbb{C})^0_1=e_1g^0_0(\eta_\mathbb{C})^0_0d^0=e_1f^0_0d^0=e_1f^0_1=f^1_1e_1, $$ by the universal property of the pushout $ F^1_1 $ there exists a unique morphism $ g^1_1 \colon F^1_1 \rightarrow D^1_1$ such that $ g^1_1(\eta_\mathbb{C})^1_1=f^1_1 $ and $g^1_1 \varepsilon_1 = e_1 g^0_1$. It is easily seen that the constructed $ g $ is an internal double functor. Its uniqueness follows from the fact that $ \eta_\mathbb{C} $ is a regular epimorphism. 
	
	To complete the proof, it suffices to observe that 2-$\mathrm{Grpd}(\mathscr{C}) $ is closed under regular quotients in $ \mathrm{Grpd}^2(\mathscr{C}) $.
	Given any regular epimorphism $f \colon \mathbb{C} \rightarrow \mathbb D$ in $ \mathrm{Grpd}^2(\mathscr{C})$ 
	\begin{equation*}
		\begin{tikzcd}
			&&&& {C^1_1} && {C^1_0} \\
			{D^1_1} && {D^1_0} \\
			&&&& {C^0_0} && {C^0_0} \\
			{D^0_1} && {D^0_0}
			\arrow["{d^1}", shift left=2, from=1-5, to=1-7]
			\arrow["{c^1}"', shift right=2, from=1-5, to=1-7]
			\arrow["{e^1}"{description}, from=1-7, to=1-5]
			\arrow["{c_1}", shift left=2, from=1-5, to=3-5]
			\arrow["{d_1}"', shift right=2, from=1-5, to=3-5]
			\arrow["{e_1}"{description}, from=3-5, to=1-5]
			\arrow["{1_{C^0_0}}", shift left=2, from=3-5, to=3-7]
			\arrow["{1_{C^0_0}}"', shift right=2, from=3-5, to=3-7]
			\arrow["{1_{C^0_0}}"{description}, from=3-7, to=3-5]
			\arrow["{c_0}", shift left=2, from=1-7, to=3-7]
			\arrow["{d_0}"', shift right=2, from=1-7, to=3-7]
			\arrow["{e_0}"{description}, from=3-7, to=1-7]
			\arrow["{f^1_1}"', two heads, from=1-5, to=2-1]
			\arrow["{f^1_0}"', near end, two heads, from=1-7, to=2-3]
			\arrow["{f^0_1}"',near start, two heads, from=3-5, to=4-1]
			\arrow["{f^0_0}"', two heads, from=3-7, to=4-3]
			\arrow["{e_0}"{description}, from=4-3, to=2-3]
			\arrow["{e^1}"{description}, from=2-3, to=2-1]
			\arrow["{e_1}"{description}, from=4-1, to=2-1]
			\arrow["{e^0}"{description}, from=4-3, to=4-1]
			\arrow["{c_1}", shift left=2, from=2-1, to=4-1]
			\arrow["{d_1}"', shift right=2, from=2-1, to=4-1]
			\arrow["{d^1}", shift left=2, from=2-1, to=2-3]
			\arrow["{c^1}"', shift right=2, from=2-1, to=2-3]
			\arrow["{c_0}", shift left=2, from=2-3, to=4-3]
			\arrow["{d_0}"', shift right=2, from=2-3, to=4-3]
			\arrow["{d^0}", shift left=2, from=4-1, to=4-3]
			\arrow["{c^0}"', shift right=2, from=4-1, to=4-3]
		\end{tikzcd}
	\end{equation*}
	where $\mathbb C$
	is a $2$-groupoid, the fact that $f^0_1 $ is a regular epimorphism in $\mathscr{C}$ easily implies that $e^0$ is an isomorphism, so that $ \mathbb{D}\in2\text{-}\mathrm{Grpd}(\mathscr{C}) $, as desired. 
\end{proof}

\begin{remark}\label{rem:alternativeproof}
	More generally, one can consider the following situation: let $ \mathscr{D} $ be a regular Mal'tsev category with finite colimits, and let $ \mathscr{X} $ be a Birkhoff subcategory of $ \mathscr{D} $. We define $ \overline{\mathscr{X}} $ to be the full subcategory of $ \mathrm{Grpd}(\mathscr{D}) $ with objects the internal groupoids $ \mathbb{C} $ whose ``object of objects" $ C_0 $ lies in $ \mathscr{X} $. By using the stability under regular quotients of the subcategory of groupoids in the category of reflexive graphs \cite{gran:1999} one can show that $ \overline{\mathscr{X}} $ is a Birkhoff subcategory of $ \mathrm{Grpd}(\mathscr{D}) $. In particular, this can be applied to the situation considered in Theorem \ref{Birkhoff}, where $ \mathscr{D}=\mathrm{Grpd}(\mathscr{C}) $ and $ \mathscr{X} $ is the Birkhoff subcategory of $ \mathrm{Grpd}(\mathscr{C}) $ whose objects are the discrete groupoids in $ \mathscr{C} $. In this case $ \overline{\mathscr{X}}\approx 2\textrm{-}\mathrm{Grpd}(\mathscr{C}) $ is a Birkhoff subcategory of $ \mathrm{Grpd}(\mathscr{D}) = \mathrm{Grpd}^2(\mathscr{C})$.
\end{remark}

\begin{corollary}
	Let $ \mathscr{C} $ be a finitely cocomplete regular  Mal'tsev category. Then $ 2\textrm{-Grpd}(\mathscr{C}) $ is also a finitely cocomplete regular Mal'tsev category. Moreover, when $\mathscr{C}$ is also exact, then the category $ 2\textrm{-Grpd}(\mathscr{C}) $ is also exact.
\end{corollary}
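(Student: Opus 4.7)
The strategy is to combine Theorem~\ref{Birkhoff} with the standard transfer properties of Birkhoff subcategories of regular (resp.\ exact) Mal'tsev categories recalled in the preliminaries, after first verifying that the ambient category $\mathrm{Grpd}^2(\mathscr{C})$ already shares the relevant properties with $\mathscr{C}$.

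First I would check that $\mathrm{Grpd}^2(\mathscr{C})$ is a finitely cocomplete regular Mal'tsev category whenever $\mathscr{C}$ is. The regular Mal'tsev property of $\mathrm{Grpd}(\mathscr{D})$ over any regular Mal'tsev $\mathscr{D}$ was recalled in the preliminaries, so iteration yields it for $\mathrm{Grpd}^2(\mathscr{C})=\mathrm{Grpd}(\mathrm{Grpd}(\mathscr{C}))$. For finite cocompleteness, I would observe that $\mathrm{RG}(\mathscr{D})$ is a functor category and hence inherits all finite colimits pointwise from $\mathscr{D}$, and that $\mathrm{Grpd}(\mathscr{D})$ is a Birkhoff, and hence reflective, subcategory of $\mathrm{RG}(\mathscr{D})$ whenever $\mathscr{D}$ is a regular Mal'tsev category with coequalizers (as stated at the end of the preliminaries). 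Since reflective subcategories of finitely cocomplete categories are themselves finitely cocomplete, $\mathrm{Grpd}(\mathscr{D})$ is finitely cocomplete, and iterating gives the property for $\mathrm{Grpd}^2(\mathscr{C})$.

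With this in hand, Theorem~\ref{Birkhoff} states that $2\text{-}\mathrm{Grpd}(\mathscr{C})$ is a Birkhoff subcategory of the regular Mal'tsev category $\mathrm{Grpd}^2(\mathscr{C})$. Being closed under subobjects in a regular Mal'tsev category, the preliminaries then give that $2\text{-}\mathrm{Grpd}(\mathscr{C})$ is itself regular Mal'tsev. Finite cocompleteness follows from the standard reflective-subcategory argument: the colimit of any small diagram in $2\text{-}\mathrm{Grpd}(\mathscr{C})$ is obtained by applying the reflector $\mathsf{F}$ of Theorem~\ref{Birkhoff} to the colimit of the same diagram computed in $\mathrm{Grpd}^2(\mathscr{C})$. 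For the moreover part, if $\mathscr{C}$ is in addition exact, the same iteration shows that $\mathrm{Grpd}^2(\mathscr{C})$ is exact Mal'tsev, and since Birkhoff subcategories of exact categories are exact (recalled in the preliminaries), $2\text{-}\mathrm{Grpd}(\mathscr{C})$ is exact as well.

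I do not foresee any genuine obstacle: the corollary is essentially a formal consequence of Theorem~\ref{Birkhoff} combined with the general stability results recorded in the preliminaries. The only point that requires a moment of care is the finite cocompleteness of $\mathrm{Grpd}^2(\mathscr{C})$, which is handled by the chain of reflective-subcategory arguments sketched above.
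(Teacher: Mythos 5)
Your proposal is correct and follows essentially the same route as the paper: establish that $\mathrm{Grpd}^2(\mathscr{C})$ is a finitely cocomplete regular (resp.\ exact) Mal'tsev category, then transfer these properties to $2\text{-}\mathrm{Grpd}(\mathscr{C})$ via Theorem~\ref{Birkhoff} and the stability facts for regular epi-reflective and Birkhoff subcategories recalled in the preliminaries. The only difference is that you spell out the finite cocompleteness of $\mathrm{Grpd}^2(\mathscr{C})$ via the reflective-subcategory-of-a-functor-category argument, whereas the paper simply cites \cite{gran:1999} for this; your added detail is accurate.
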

\begin{proof}
	As recalled before, a regular epi-reflective subcategory $ \mathscr{X} $ of a regular  Mal'tsev category $ \mathscr{C} $ is again a regular  Mal'tsev category.  If $ \mathscr{C} $ is a finitely cocomplete regular Mal'tsev category, then $ \mathrm{Grpd}^2(\mathscr{C}) $ satisfies the same properties \cite{gran:1999}. Hence by Theorem~\ref{Birkhoff} it follows that the same holds for $ 2\textrm{-Grpd}(\mathscr{C}) $.
	Finally, when  $\mathscr{C} $ is exact, the Birkhoff assumption implies that 2-$ \mathrm{Grpd}(\mathscr{C}) $ is exact. 
\end{proof}

It is well-known that the category $ \mathrm{Grpd}(\mathscr{C}) $ of internal groupoids in a Mal'tsev variety $\mathscr{C}$ can be presented as a finitary variety of universal algebras, since it is a Birkhoff subcategory of the variety $\mathrm{RG}(\mathscr{C})$ of reflexive graphs in $\mathscr{C}$ (see \cite{janelidze:1990}, and Corollary $2.4$ in \cite{Gran-Rosicky}, which is based on some results in \cite{Ja-Pe}).
There is a simple relationship between the algebraic theory of the Mal'tsev variety $\mathscr{C}$ and the theory of the variety 
$ \mathrm{Grpd}(\mathscr{C}) $.
Indeed, let us assume that $\mathscr{C}$ is a Mal'tsev variety whose theory contains a Mal'tsev term $p(x,y,z)$, some $n_i$-ary terms ${\omega}_i (x_1, \cdots, x_{n_i})$ for $i \in I$ and $n_i \in {\mathbb N}$, satisfying some identities $\tau_j = \sigma_j$, for some terms $\tau_j$ and $\sigma_j$ with $j \in J$. Then the category $ \mathrm{Grpd}(\mathscr{C}) $ is equivalent to the Mal'tsev variety whose algebraic theory contains all the terms and the identities above, plus two additional unary operations $s$ and $t$, such that $s$ and $t$ are homomorphisms in $\mathscr{C}$, $st=t$ and $ts=s$, and the universal algebraic commutator $[Eq(s), Eq(t)]$ of the kernel congruences $Eq(s)$ and $Eq(t)$ of $s$ and $t$ is trivial. This algebraic description of the category of internal groupoids is an extension to Mal'tsev varieties of the one originally given by J.-L. Loday in \cite{Loday} in the special case of the variety of groups. Starting from a groupoid whose underlying reflexive graph is diagram \eqref{reflexive-graph}, the unary operations $s$ and $t$ are the composite homomorphisms $s=ed$ and $t=ec$, that clearly satisfy the identities $st=t$ and $ts=s$. The commutator $[Eq(s), Eq(t)]$ vanishes if and only if there is a unique groupoid structure on the reflexive graph \eqref{reflexive-graph} \cite{Ja-Pe}.

\begin{corollary}\label{theory-variety}
	Let $ \mathscr{C} $ be a Mal'tsev variety of universal algebras. Then $ 2\text{-}\mathrm{Grpd}(\mathscr{C}) $ is a Mal'tsev variety.
\end{corollary}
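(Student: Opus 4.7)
The plan is to combine Theorem~\ref{Birkhoff} with the classical Birkhoff HSP theorem. First, I would recall that, by iterating the Janelidze--Pedicchio description of the category of internal groupoids in a Mal'tsev variety as itself a Mal'tsev variety (quoted just before the statement), the category $\mathrm{Grpd}^2(\mathscr{C})=\mathrm{Grpd}(\mathrm{Grpd}(\mathscr{C}))$ is a Mal'tsev variety. Explicitly, its algebraic theory is obtained from that of $\mathscr{C}$ by adjoining two pairs of unary operations $(s_1,t_1)$ and $(s^1,t^1)$, corresponding respectively to the vertical and horizontal groupoid structures on $C^1_1$, together with the identities $s_1 t_1 = t_1$, $t_1 s_1 = s_1$, $s^1 t^1 = t^1$, $t^1 s^1 = s^1$, commutation identities such as $s^1 s_1 = s_1 s^1$ expressing that $s^1$ and $t^1$ are $\mathrm{Grpd}(\mathscr{C})$-homomorphisms, and the vanishing of the commutators $[\mathrm{Eq}(s_1),\mathrm{Eq}(t_1)]$ and $[\mathrm{Eq}(s^1),\mathrm{Eq}(t^1)]$.

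Next, by Theorem~\ref{Birkhoff} the subcategory $2\text{-}\mathrm{Grpd}(\mathscr{C})$ is a Birkhoff subcategory of $\mathrm{Grpd}^2(\mathscr{C})$, hence is a full, replete, reflective subcategory closed under subobjects and regular quotients. In the varietal context these properties translate, respectively, to closure under products (a reflective subcategory being closed under all limits), subalgebras, and homomorphic images, and so by Birkhoff's HSP theorem $2\text{-}\mathrm{Grpd}(\mathscr{C})$ is a subvariety of $\mathrm{Grpd}^2(\mathscr{C})$. In the single-sorted description the subalgebra $C^0_1 \hookrightarrow C^1_1$ is realised as the fixed-point subalgebra $\mathrm{Fix}(s_1)$, and the $2$-groupoid condition ``$e^0$ is an isomorphism'' amounts to the inclusion $\mathrm{Fix}(s_1)\subseteq \mathrm{Fix}(s^1)$, which is encoded by the single additional identity $s^1(s_1(x))=s_1(x)$ (equivalently $t^1(t_1(x))=t_1(x)$) added to the theory of $\mathrm{Grpd}^2(\mathscr{C})$.

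Finally, any subvariety of a Mal'tsev variety is itself a Mal'tsev variety: the ternary Mal'tsev term $p(x,y,z)$ belongs to the sub-theory and continues to satisfy $p(x,y,y)=x$ and $p(x,x,y)=y$ there. This yields the conclusion. The only mildly delicate point is the translation of the categorical notion ``Birkhoff subcategory'' into the algebraic notion ``subvariety'', which depends on observing that a full reflective subcategory of a variety is automatically closed under products, so that the HSP hypotheses are available; I do not anticipate any substantive obstacle.
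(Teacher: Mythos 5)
Your proposal is correct and follows essentially the same route as the paper: iterate the Janelidze--Pedicchio varietal description to present $\mathrm{Grpd}^2(\mathscr{C})$ as a Mal'tsev variety with two pairs of unary operations, then invoke Theorem~\ref{Birkhoff} together with Birkhoff's HSP theorem to conclude that $2\text{-}\mathrm{Grpd}(\mathscr{C})$ is a subvariety, hence again a Mal'tsev variety. The only (cosmetic) difference is in the presentation of the extra identities cutting out the subvariety: the paper lists $u=us=ut$ and $v=vs=vt$, whereas you observe that the single identity $s^1(s_1(x))=s_1(x)$ already forces $e^0 d^0=1_{C^0_1}$ and hence $e^0$ to be an isomorphism; both sets of identities present the same subvariety.
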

\begin{proof}
	The category $\mathrm{Grpd}^2(\mathscr{C}) $ of double groupoids in $\mathscr{C}$ is also a variety, since it is again a category of internal groupoids in the Mal'tsev variety $ \mathrm{Grpd}(\mathscr{C}) $. More explicitly, when $\mathscr{C}$ is a Mal'tsev variety as above, then $\mathrm{Grpd}^2(\mathscr{C}) $ is equivalent to a Mal'tsev variety whose algebraic theory contains all the terms $p(x,y,z)$, ${\omega}_i (x_1, \cdots, x_{n_i})$ for $i \in I$ and $n_i \in {\mathbb N}$ satisfying the identities $\tau_j = \sigma_j$, for some terms $\tau_j$ and $\sigma_j$ with $j \in J$,
	together with four additional unary operations $s$, $t$, $u$ and $v$ satisfying the following conditions:
	\begin{itemize}
		\item $st = t, ts = s, uv = v, vu = u, su = us, sv = vs, tu = ut, tv = vt$;
		\item the unary operations $s$, $t$, $u$ and $v$ are algebra homomorphisms in $\mathscr{C}$;
		\item the commutators $[Eq(s), Eq(t)]$ and $[Eq(u), Eq(v)]$ are trivial.   
	\end{itemize}
	By Theorem \ref{Birkhoff} we deduce that $ 2\text{-}\mathrm{Grpd}(\mathscr{C}) $ is a subvariety of the variety described here above. Indeed, with this presentation of the algebraic theory of $ \mathrm{Grpd}^2(\mathscr{C}) $ the additional identities determining the subvariety 
	$ 2\text{-}\mathrm{Grpd}(\mathscr{C}) $ of $2$-groupoids in $\mathscr{C} $  are the following:
	$$u = us = ut, v = vs = vt.$$
\end{proof}
\begin{remark}
	The stability under regular quotients of the category $\mathrm{Grpd}(\mathscr{C})$ in $\mathrm{RG}(\mathscr{C})$ \cite{GRT} also holds in any Goursat category $ \mathscr{C} $ in the sense of \cite{CKP}.
	This implies that Theorem \ref{Birkhoff} still holds when $\mathscr{C}$ is a Goursat category with finite colimits. However, when $\mathscr{C}$ is a Goursat (=$3$-permutable) variety, it is no longer true that  $\mathrm{Grpd}(\mathscr{C})$ is a subvariety of $\mathrm{RG}(\mathscr{C})$ since it is not stable under subobjects \cite{Gran-Rosicky}, hence Corollary \ref{theory-variety} does not apply.
	
\end{remark}

\section{Naturally Mal'tsev categories}

Recall that a category $\mathscr{C}$ with products is a naturally Mal'tsev category if there is a natural transformation $p \colon \mathsf{Id}_\mathscr{C}  \times \mathsf{Id}_\mathscr{C} \times \mathsf{Id}_\mathscr{C} \rightarrow \mathsf{Id}_\mathscr{C}$ such that each component $p_A \colon A \times A \times A \rightarrow A$ of this natural transformation is an internal Mal'tsev operation for any $A \in \mathscr{C}$ \cite{Johnstone}. Naturally Mal'tsev categories are characterized by the property that the forgetful functor from the category of internal groupoids to the category of reflexive graphs is an isomorphism. This also implies that the category of double groupoids is isomorphic to the category of double reflexive graphs.

We are going to show that in the naturally Mal'tsev context the adjunction between the categories of internal $2$-groupoids and of double groupoids described in the previous section
has a remarkable additional property.
For this, we shall use a characterization of naturally Mal'tsev categories due to D. Bourn (see Proposition 7.8.1 in \cite{BB}) that we shall reformulate using the following observation.

Given a regular epimorphism $q:X\to Y$ and an arbitrary morphism $\delta:X\to S$ in a category $\mathscr{C}$ with kernel pairs and coequalizers, the pushout of $q$ along $\delta$ exists and can be constructed in the following way. We consider the kernel pair $(\mathrm{Eq}(q),q_1,q_2)$ of $q$ and construct the coequalizer $(C,c)$ of $\delta q_1$ and $\delta q_2$. This yields a unique morphism $\bar{\delta}$ such that $\bar{\delta}q=c\delta$. Then the commutative square 
\begin{equation}\label{pushout}
	\begin{tikzcd}
		S \arrow[r, "c"] & C\\
		X \arrow[u, "\delta"] \arrow[r, "q"'] & Y \arrow[u, "\bar{\delta}"']
	\end{tikzcd}  
\end{equation}
is easily seen to be the pushout of $q$ and $\delta$.

If $ \delta $ is a split monomorphism, so that there is an $ f:S\to X $ such that $ f\delta=1_X $, there is a unique morphism $ \bar{f} $ such that $ \bar{f}\bar{\delta}=1_Y $ and $ \bar{f}c=q\delta $, i.e., $ \bar{\delta} $ is a split monomorphism with splitting $ \bar{f} $ and the diagram 
\begin{equation}\label{pullback}
	\begin{tikzcd}
		S \arrow[d, "f"'] \arrow[r, "c"] & C \arrow[d, "\overline{f}"] \\
		X \arrow[r, "q"'] & Y
	\end{tikzcd}
\end{equation}
is commutative. 

\begin{proposition}\label{BB} \cite{BB}
	Let $\mathscr{C}$ be an exact Mal'tsev category with coequalizers. The following conditions are equivalent: 
	\begin{enumerate}
		\item $\mathscr{C}$ is a naturally Mal'tsev category; 
		\item given a regular epimorphism $q \colon X \rightarrow Y$ and a split monomorphism $\delta \colon X \rightarrow S$ with splitting $ f $, the induced commutative square \eqref{pullback}
		is a pullback. 
	\end{enumerate}
\end{proposition}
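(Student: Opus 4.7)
\emph{Plan.} I would prove the two implications separately.

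For $(1) \Rightarrow (2)$, assume $\mathscr{C}$ is naturally Mal'tsev with natural operation $p \colon \mathsf{Id}^{3} \to \mathsf{Id}$. I would form the pullback $(P, p_X, p_C)$ of $\bar{f}$ and $q$, and use the equality $qf = \bar{f}c$ to produce the comparison morphism $\alpha \colon S \to P$ with $p_X \alpha = f$ and $p_C \alpha = c$. The goal is to show $\alpha$ is an isomorphism. The first observation is that, in a regular Mal'tsev category, the square \eqref{pullback} is a regular pushout in Bourn's sense: since $q$ is a regular epimorphism and $f$ is a split epimorphism (being the splitting of $\delta$), the comparison $\alpha$ is automatically a regular epimorphism (this uses the denormalized Barr--Kock / $3 \times 3$ lemma available in the regular Mal'tsev context). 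To conclude, it then suffices to show that $\alpha$ is a monomorphism, and this is where the \emph{naturality} of $p$ enters: given two morphisms $u, v \colon T \to S$ with $\alpha u = \alpha v$, equivalently $fu = fv$ and $cu = cv$, one expresses suitable elements of $S$ as applications of $p_S$ to triples involving $u, v, \delta f u$ and $\delta f v$, and uses the naturality square of $p$ together with the pushout presentation of $c$ via the kernel pair of $q$ (diagram \eqref{pushout}) to conclude $u = v$.

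For $(2) \Rightarrow (1)$, I would construct the natural Mal'tsev operation on each object $A \in \mathscr{C}$ by applying hypothesis $(2)$ to a universal instance. A natural choice is to take $X = A \times A$, $Y = A$, $q = \pi_{2} \colon A \times A \to A$ (a split, hence regular, epimorphism), $S = A \times A \times A$, and $\delta = \langle \pi_1, \pi_2, \pi_2 \rangle \colon A \times A \to A \times A \times A$, which is split by $f = \langle \pi_1, \pi_2 \rangle$. The hypothesis then forces the induced pushout square to be a pullback. The Mal'tsev operation $p_A \colon A^{3} \to A$ is extracted by exhibiting a suitable cone over this pullback, built from the projections $\pi_i$, so that the resulting unique factorization into $S = A^{3}$ encodes $p_A$. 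Naturality in $A$ is automatic because the whole construction (pushout, pullback, comparison) is functorial in $A$, so a morphism $A \to B$ induces a commutative cube whose commutativity gives the required naturality square for $p$.

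The main obstacle is the $(1) \Rightarrow (2)$ direction, more precisely the passage from ``$\alpha$ is a regular epimorphism'' to ``$\alpha$ is a monomorphism''. Regularity of $\alpha$ is a formal consequence of the regular Mal'tsev structure, but monomorphy genuinely depends on the natural Mal'tsev operation and is sensitive to the explicit pushout description of $C$ via the kernel pair of $q$. Organising the computation so that the naturality square for $p$ is applied to the correct parallel pair (coming from $\mathrm{Eq}(q)$) is the delicate step.
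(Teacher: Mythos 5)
The paper offers no proof of this statement: it is quoted from Bourn's book (Proposition~7.8.1 of \cite{BB}), so your proposal has to be judged on its own merits. Your plan for $(1)\Rightarrow(2)$ is viable. The first step is correct and, as you note, uses only that $\mathscr{C}$ is regular Mal'tsev: a commutative square whose vertical arrows are split epimorphisms with compatible sections and whose horizontal arrows are regular epimorphisms has a regular epimorphic comparison to the pullback. The monomorphism step, however, is where all the content sits, and your sketch does not yet reach it: since $fu=fv$ forces $\delta fu=\delta fv$, the triples you name carry no information by themselves. What is actually needed is an explicit identification of $\mathrm{Eq}(c)$: one checks that the image $\Theta$ of $S\times\mathrm{Eq}(q)\to S\times S$, $(s,(x,x'))\mapsto (p_S(s,\delta x',\delta x),\,s)$, is a reflexive relation containing the image of $(\delta q_1,\delta q_2)$ and admitting the same coequalizer (naturality of $p$ applied to test morphisms out of $S$ is used here), so that $\Theta=\mathrm{Eq}(c)$ by exactness; then for a pair in $\Theta\cap\mathrm{Eq}(f)$ naturality of $p$ along $f$ gives $p_X(fs,x',x)=fs=p_X(fs,x',x')$, whence $x=x'$ because $p_X(a,b,-)$ is monic (inverse $p_X(b,a,-)$, using the associativity of the natural Mal'tsev structure), and the pair is diagonal. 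This is completable, but it is precisely the ``delicate step'' you flag and it is still missing.

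The direction $(2)\Rightarrow(1)$ contains a genuine error: your universal instance is vacuous. With $q=\pi_2\colon A\times A\to A$ and $\delta=\langle\pi_1,\pi_2,\pi_2\rangle$, the coequalizer $c$ only identifies $(a,b,b)$ with $(a',b,b)$; already in $\mathsf{Grp}$ one gets $C\cong A\times A$ via the last two coordinates, and the square \eqref{pullback} is a pullback for \emph{every} group $A$. So condition (2) for this instance holds in any exact Mal'tsev category with coequalizers (it encodes the centralization of $\mathrm{Eq}(\pi_1)$ and $\mathrm{Eq}(\pi_2)$ on $A\times A$, which is automatic) and cannot yield a Mal'tsev operation. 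The instance you want is $q=\pi_1$ with the same $\delta$ --- equivalently, Pedicchio's construction for $R=S=\nabla_A$, with $q$ the coequalizer of the two projections $A\times A\rightrightarrows A$ and $\delta=\Delta_A$ --- for then $c$ identifies $(a,b,b)$ with $(a,b',b')$ and the pullback property expresses exactly that $[\nabla_A,\nabla_A]=\Delta_A$. There is also a gap in the extraction mechanism: a cone ``built from the projections'' can only ever return projections. For the $q=\pi_1$ instance the correct cone is $(\langle\pi_1,\pi_1\rangle,\,c)\colon A^3\to A^2\times_A C$, whose unique lift $\psi\colon A^3\to A^3$ through the pullback yields $p_A:=\pi_3\psi$; the Mal'tsev identities follow by evaluating $\psi$ on $\langle\pi_1,\pi_2,\pi_2\rangle$ and $\langle\pi_1,\pi_1,\pi_2\rangle$ and using uniqueness of factorizations through the pullback. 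Naturality then follows either from the functoriality you invoke or from the uniqueness of internal Mal'tsev operations in a Mal'tsev category.
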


\begin{proposition}\label{units-NM}
	Let $\mathscr{C}$ be an exact naturally Mal'tsev category with coequalizers. Given a double groupoid $\mathbb C$ in $\mathscr{C}$, depicted as \begin{equation*}
		\begin{tikzcd}
			C^1_1 \arrow[d, shift right=3, "d_1"'] \arrow[d, shift left=3, "c_1"] \arrow[r, shift left=3, "d^1"] \arrow[r, shift right=3, "c^1"']  &C^1_0 \arrow[d, shift right=3, "d_0"'] \arrow[d, shift left=3, "c_0"] \arrow[l, "e^1" description] \\
			C^0_1 \arrow[u, "e_1" description] \arrow[r, shift left=3, "d^0"] \arrow[r, shift right=3, "c^0"'] &{C^0_0,} \arrow[u, "e_0" description] \arrow[l, "e^0" description]
		\end{tikzcd}
	\end{equation*} 
	by applying the  reflector $F \colon \mathrm{Grpd}^2(\mathscr{C}) \rightarrow 2\text{-}\mathrm{Grpd}(\mathscr{C})$ to $\mathbb C$ 
	the internal functors
	\begin{equation*}
		\begin{tikzcd}
			C_0^1 \arrow[r, "{(\eta_C)}_0^1"] \arrow[d, shift left=3, "c_0"] \arrow[d, shift right=3, "d_0"'] &F_1^0 \arrow[d, shift left=3, "\gamma_0"] \arrow[d, shift right=3, "\delta_0"']   & \\
			C_0^0 \arrow[u, "e_0" description] \arrow[r, "{(\eta_C)}_0^0"'] &F_0^0, \arrow[u, "\varepsilon_0" description] &
		\end{tikzcd}
		\begin{tikzcd}
			& C_1^1 \arrow[r, "{(\eta_C)}_1^1"] \arrow[d, shift left=3, "c_1"] \arrow[d, shift right=3, "d_1"'] &F_1^0 \arrow[d, shift left=3, "\gamma_1"] \arrow[d, shift right=3, "\delta_1"']  \\
			&  F_1^1 \arrow[u, "e_1" description] \arrow[r, "{(\eta_C)}_1^0"']  &F_1^0, \arrow[u, "\varepsilon_1" description]
		\end{tikzcd}
	\end{equation*}
	in  diagram \eqref{reflection} are both discrete fibrations: all the downward directed commutative squares are pullbacks.
\end{proposition}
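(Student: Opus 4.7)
The plan is to recognize that both squares in each of the two displayed groupoid functors arise directly from the two pushouts used to build $\mathbb{F}$ in the proof of Theorem~\ref{Birkhoff}, and that in each case the data verify the hypotheses of Proposition~\ref{BB}. Concretely, $F^1_0$ is the pushout of the regular epimorphism $(\eta_\mathbb{C})^0_0$ (a coequalizer, hence a regular epi) along the split monomorphism $e_0$, and $F^1_1$ is the pushout of $(\eta_\mathbb{C})^0_1$ along the split monomorphism $e_1$. Each of the two splittings $d$ and $c$ of the corresponding $e$ produces a square of the type \eqref{pullback}, so Proposition~\ref{BB} will force it to be a pullback.

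For the right-hand functor, I would apply Proposition~\ref{BB} to the pushout defining $F^1_0$ with the splitting $f=d_0$ of $e_0$. The resulting $\bar f$ is characterized by $\bar f\,\varepsilon_0=1_{F^0_0}$ and $\bar f\,(\eta_\mathbb{C})^1_0=(\eta_\mathbb{C})^0_0\,d_0$; these are exactly the equations defining $\delta_0$ in diagram~\eqref{reflection}, so $\bar f=\delta_0$ and the downward $d_0$--$\delta_0$ square is a pullback. Taking instead $f=c_0$ identifies $\bar f$ with $\gamma_0$ by the same token, giving the other pullback square.

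For the left-hand functor, the only additional point is that $(\eta_\mathbb{C})^0_1=(\eta_\mathbb{C})^0_0 d^0$ is a regular epimorphism: indeed $d^0$ is a split (hence regular) epi because $d^0 e^0=1_{C^0_0}$, and regular epimorphisms compose in the regular category $\mathscr{C}$. Thus Proposition~\ref{BB} applies to the pushout defining $F^1_1$. With $f=d_1$ (a splitting of $e_1$), the required $\bar f$ is identified with $\delta_1=\delta_0\delta^1$ by the computation
\[
\delta_1(\eta_\mathbb{C})^1_1=\delta_0\delta^1(\eta_\mathbb{C})^1_1=\delta_0(\eta_\mathbb{C})^1_0 d^1=(\eta_\mathbb{C})^0_0 d_0 d^1=(\eta_\mathbb{C})^0_0 d^0 d_1=(\eta_\mathbb{C})^0_1 d_1,
\]
together with $\delta_1\varepsilon_1=\delta_0\varepsilon_0=1_{F^0_0}$; the analogous check with $c_1$ and $\gamma_1$ handles the remaining square.

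This produces the four desired pullback squares, so both internal functors are discrete fibrations. There is no conceptual obstacle; the main point is the purely bookkeeping step of recognising that the universal arrows $\delta_0,\gamma_0,\delta_1,\gamma_1$ of \eqref{reflection} are exactly the arrows $\bar f$ produced by Proposition~\ref{BB} for the four choices of splitting, after which the naturally Mal'tsev hypothesis does all the work.
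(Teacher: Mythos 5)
Your proof is correct and takes exactly the approach the paper compresses into one line: you identify the four morphisms $\delta_0,\gamma_0,\delta_1,\gamma_1$ with the canonical splittings $\bar f$ produced by Proposition~\ref{BB} applied to the two pushouts (of $(\eta_{\mathbb C})^0_0$ along $e_0$ and of $(\eta_{\mathbb C})^0_1$ along $e_1$) in the construction of the reflector. The two details you supply --- that $(\eta_{\mathbb C})^0_1=(\eta_{\mathbb C})^0_0 d^0$ is a regular epimorphism, and the interchange identity $d_0d^1=d^0d_1$ used to match $\delta_1$ with $\bar f$ --- are precisely what is needed to make the paper's ``direct consequence'' explicit.
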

\begin{proof}
	This is a direct consequence of the construction of the reflector $F \colon \mathrm{Grpd}^2(\mathscr{C}) \rightarrow 2\text{-}\mathrm{Grpd}(\mathscr{C})$ 
	and Proposition \ref{BB}. 
\end{proof}

\begin{remark}
	By looking at the construction given by M.C. Pedicchio in \cite{Pedicchio} of the commutator of two equivalence relations $R$ and $S$ in an exact Mal'tsev category with coequalizers, one sees that the construction of the smallest double equivalence relation $\Delta_R^S$ on $R$ and $S$ uses the same type of pushouts \eqref{pushout} that we have considered here above. Given $R$ and $S$, one first takes the coequalizer $q \colon X \rightarrow X/R$ and then the coequalizer $c \colon S\rightarrow S/R$ of $\delta_S r_1$ and 
	$\delta_S r_2$:
	\begin{equation*}
		\begin{tikzcd}
			\Delta_R^S \arrow[d, shift right=3, "p_1"'] \arrow[d, shift left=3, "p_2"] \arrow[r, shift left=3, "\pi_1"] \arrow[r, shift right=3, "\pi_2"'] & S \arrow[l, "\delta_\pi"description] \arrow[d, shift right=3, "s_1"'] \arrow[d, shift left=3, "s_2"] \arrow[r, "c"] & S/R \arrow[d, shift right=3, "\bar{s_1}"'] \arrow[d, shift left=3, "\bar{s_2}"]\\
			R \arrow[u, "\delta_p"description] \arrow[r, shift left=3, "r_1"] \arrow[r, shift right=3, "r_2"'] & X \arrow[l, "\delta_R"description] \arrow[u, "\delta_S"description] \arrow[r, "q"'] & X/R \arrow[u, "\bar{\delta_S}"description]
		\end{tikzcd}
	\end{equation*}
	Note that, as seen above, $(S/R, c,\bar{\delta_S})$ is also the pushout of $q$ and $\delta_S$.
	The double equivalence relation $\Delta_R^S$ on $R$ and $S$ is obtained by taking the kernel pair $(\Delta_R^S, \pi_1, \pi_2)$ of $c$, and then it is easy to see that the induced maps $p_1$, $p_2$ and $\delta_p$ determine a reflexive relation, hence an equivalence relation, on $R$. 
	Thanks to Proposition \ref{BB}, when $\mathscr{C}$ is an exact naturally Mal'tsev category with coequalizers, this double equivalence relation is necessarily a \emph{double centralizing relation} on $R$ and $S$, which means that all the commutative squares on the left are pullbacks. This implies that the categorical commutator $[R,S]$ of $R$ and $S$ is trivial, i.e. the smallest equivalence relation on $X$. This shows an interesting and unexpected connection between commutator theory and the reflector $F$ from the category of internal double groupoids to the one of $2$-groupoids considered in the previous section.
\end{remark}

\section{Action representability of the category $ 2\textrm{-Grpd}(\mathscr{C}) $ }\label{Action-R}

The starting point of this section is the following

\begin{theorem}\label{thm:Grpd(C)actionrepresentable}\cite[Theorem 2.10]{gran.gray:2021}
	For a category $ \mathscr{C} $, the following conditions are equivalent: 
	\begin{enumerate}
		\item $ \mathscr{C} $ is a semi-abelian action representable algebraically coherent category with normalizers;
		\item $ \mathrm{Grpd}(\mathscr{C}) $ is a semi-abelian action representable algebraically coherent category with normalizers. 
	\end{enumerate}
\end{theorem}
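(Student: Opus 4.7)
I would prove the two directions separately; $(2) \Rightarrow (1)$ is the routine direction, while $(1) \Rightarrow (2)$ contains the essential content.

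For $(2) \Rightarrow (1)$: the discrete-groupoid functor $\mathsf{d} \colon \mathscr{C} \to \mathrm{Grpd}(\mathscr{C})$, sending $X$ to the identity reflexive graph on $X$, is fully faithful and has both a left adjoint $\pi_0$ (connected components) and a right adjoint $(-)_0$ (object of objects). It therefore identifies $\mathscr{C}$ with a full subcategory that is simultaneously reflective and coreflective in $\mathrm{Grpd}(\mathscr{C})$ and closed under finite limits and colimits. Semi-abelianness, algebraic coherence and the existence of normalizers then transfer through this doubly adjoint inclusion, since the relevant kernels, cokernels, change-of-base functors along split epimorphisms and normalizer constructions in $\mathscr{C}$ all coincide with the corresponding ones in $\mathrm{Grpd}(\mathscr{C})$ applied to discrete groupoids. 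For action representability, I would extract the classifier $[A]$ in $\mathscr{C}$ as a suitable subobject of $([\mathsf{d}(A)])_0$ carved out by the condition that the classified split extension of $\mathsf{d}(B)$ by $\mathsf{d}(A)$ lie inside the subcategory of discrete groupoids, this being well-defined in the semi-abelian setting.

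For $(1) \Rightarrow (2)$: semi-abelianness of $\mathrm{Grpd}(\mathscr{C})$ has already been recalled in the preliminaries. For algebraic coherence and for normalizers I would argue levelwise: jointly strongly epimorphic pairs of internal functors are levelwise such, change-of-base along a split epimorphism of internal groupoids is computed levelwise, and the normalizer of an internal subgroupoid $\mathbb{K} \leq \mathbb{E}$ can be built as the reflexive subgraph $(N_{E_1}(K_1) \rightrightarrows N_{E_0}(K_0))$, which automatically assembles into an internal subgroupoid thanks to the Mal'tsev property of $\mathrm{Grpd}(\mathscr{C})$.

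The main obstacle is the construction of the split extension classifier $[\mathbb{A}]$ of an internal groupoid $\mathbb{A} = (A_1 \rightrightarrows A_0)$. A split extension $0 \to \mathbb{A} \to \mathbb{E} \to \mathbb{B} \to 0$ in $\mathrm{Grpd}(\mathscr{C})$ unpacks into a pair of split extensions $0 \to A_i \to E_i \to B_i \to 0$ in $\mathscr{C}$, one for each $i \in \{0,1\}$, compatible with the structure morphisms $d$, $c$, $e$, $m$ of $\mathbb{A}$ and $\mathbb{B}$. By action representability of $\mathscr{C}$, each level is separately classified by a morphism $B_i \to [A_i]$; the compatibility conditions translate, via functoriality and naturality of $[-]$ in $\mathscr{C}$, into induced morphisms $[A_1] \rightrightarrows [A_0]$ giving the pair $([A_1], [A_0])$ a reflexive graph structure, which in the Mal'tsev setting automatically carries a unique internal groupoid structure $[\mathbb{A}]$. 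The delicate verification is that internal functors $\mathbb{B} \to [\mathbb{A}]$ correspond precisely to compatible pairs of classifying morphisms $(B_i \to [A_i])_{i=0,1}$; algebraic coherence of $\mathscr{C}$ enters to handle the interaction of the composition map $m$ with actions, ensuring that the action on composable pairs $A_1 \times_{A_0} A_1$ is determined by the individual level actions, and hence that the above construction is functorial and represents the split-extension functor on $\mathrm{Grpd}(\mathscr{C})$.
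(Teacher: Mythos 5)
This theorem is imported from \cite{gran.gray:2021} rather than proved in the present paper, so your attempt has to be measured against the strategy of that reference, which the paper itself replays for $2$-groupoids in Theorem~\ref{main-AR}. Your direction $(2)\Rightarrow(1)$ is essentially sound: the discrete functor $\mathsf{d}\colon\mathscr{C}\to\mathrm{Grpd}(\mathscr{C})$ is fully faithful with both adjoints, identifies $\mathscr{C}$ with a Birkhoff and coreflective subcategory of $\mathrm{Grpd}(\mathscr{C})$, and is in particular a protoadditive fully faithful left adjoint, so action representability descends by Proposition~\ref{prop:actionrepresentablesubcategory}; this is cleaner than your ad hoc extraction of $[A]$ from $([\mathsf{d}(A)])_0$. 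The substantive direction $(1)\Rightarrow(2)$, however, contains a genuine gap. Your construction of $[\mathbb{A}]$ as a reflexive graph $[A_1]\rightrightarrows[A_0]$ fails for three reasons. First, the split extension classifier is not functorial --- already in $\mathsf{Grp}$ there is no homomorphism $\mathrm{Aut}(A_1)\to\mathrm{Aut}(A_0)$ induced by $d\colon A_1\to A_0$ --- so the ``induced morphisms $[A_1]\rightrightarrows[A_0]$'' you invoke do not exist. Second, even granted a reflexive graph, the Mal'tsev property gives \emph{at most} one groupoid structure, not the existence of one; existence of the composition is exactly what would need proving. Third, and most decisively, the levelwise objects are simply not the right ones: take $\mathscr{C}=\mathsf{Grp}$ and $\mathbb{A}$ the one-object groupoid on an abelian group $A$ (so $A_1=A$, $A_0=1$). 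Under the equivalence with crossed modules, the actor of $(A\to 1)$ is the Whitehead actor $(\mathrm{Der}(1,A)\to\mathrm{Aut}(A))=(1\to\mathrm{Aut}(A))$, i.e.\ the \emph{discrete} internal groupoid on $\mathrm{Aut}(A)$, whose object of objects is $\mathrm{Aut}(A)$, whereas your recipe predicts $[A_0]=[1]=1$. The same non-functoriality objection undermines your levelwise normalizer $N_{E_1}(K_1)\rightrightarrows N_{E_0}(K_0)$: nothing guarantees that $d$, $c$, $e$ restrict to these subobjects.

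A symptom of the problem is that your argument for $(1)\Rightarrow(2)$ makes no essential, verifiable use of the normalizer hypothesis. The actual proof runs quite differently: one first establishes all four properties for the diagram category $\mathrm{RG}(\mathscr{C})$; one then shows that $\mathrm{Grpd}(\mathscr{C})$ is not only a Birkhoff subcategory of $\mathrm{RG}(\mathscr{C})$ but also \emph{coreflective} in it, the construction of the coreflection (the largest subgroupoid supported by a reflexive graph) being precisely where normalizers enter; finally one transfers action representability along the resulting fully faithful protoadditive left adjoint inclusion $\mathrm{Grpd}(\mathscr{C})\hookrightarrow\mathrm{RG}(\mathscr{C})$ via Proposition~\ref{prop:actionrepresentablesubcategory}. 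This is exactly the pattern the paper follows for $2\text{-}\mathrm{Grpd}(\mathscr{C})\hookrightarrow\mathrm{Grpd}^2(\mathscr{C})$ in Theorem~\ref{main-AR}, using Lemma~\ref{prop:2-Grpd(C)coreflective}. To repair your outline you should abandon the levelwise construction of $[\mathbb{A}]$ and argue through this coreflection instead.
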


We will show in this section that if $ \mathscr{C} $ is  semi-abelian action representable algebraically coherent with normalizers then also $ 2\textrm{-Grpd}(\mathscr{C}) $ is so. Examples of such  categories $ \mathscr{C} $ are given by the categories of groups, of Lie algebras over a commutative ring, and of cocommutative Hopf algebras over a field. 

We begin this section by proving 

\begin{corollary}
	Let $ \mathscr{C} $ be a semi-abelian category. Then $ 2\text{-}\mathrm{Grpd}(\mathscr{C}) $ is a semi-abelian category.
\end{corollary}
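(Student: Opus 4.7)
The plan is to derive this corollary directly from the general preservation results already recalled in the Preliminaries, together with Theorem \ref{Birkhoff}. Since $ \mathscr{C} $ is semi-abelian, it is in particular finitely cocomplete and a regular Mal'tsev category, so the hypotheses of Theorem \ref{Birkhoff} are satisfied and $ 2\text{-}\mathrm{Grpd}(\mathscr{C}) $ is identified as a Birkhoff subcategory of $ \mathrm{Grpd}^2(\mathscr{C}) $.

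First I would invoke the fact, explicitly stated in the Preliminaries, that $ \mathrm{Grpd}(\mathscr{C}) $ is semi-abelian whenever $ \mathscr{C} $ is. Applying this statement twice — once to $ \mathscr{C} $ and a second time to $ \mathrm{Grpd}(\mathscr{C}) $ itself — yields that $ \mathrm{Grpd}^2(\mathscr{C}) = \mathrm{Grpd}(\mathrm{Grpd}(\mathscr{C})) $ is semi-abelian.

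Next I would apply the general principle, also recorded in the Preliminaries, that any Birkhoff subcategory of a semi-abelian category is again semi-abelian. Combining this with the Birkhoff inclusion $ 2\text{-}\mathrm{Grpd}(\mathscr{C}) \hookrightarrow \mathrm{Grpd}^2(\mathscr{C}) $ supplied by Theorem \ref{Birkhoff} at once gives the desired conclusion.

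There is no genuine obstacle here: the argument is a short composition of preservation results already in hand. The only elementary verifications required are that a semi-abelian $ \mathscr{C} $ really satisfies the hypotheses of Theorem \ref{Birkhoff} (immediate, as semi-abelian categories are finitely cocomplete regular Mal'tsev), and that each defining axiom of a semi-abelian category (pointedness, Barr-exactness, protomodularity, existence of binary coproducts) is transported along the two cited preservation steps — which is precisely the content of those results.
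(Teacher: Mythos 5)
Your proposal is correct and follows exactly the paper's own argument: apply the preservation of semi-abelianness under $\mathrm{Grpd}(-)$ twice to get that $\mathrm{Grpd}^2(\mathscr{C})$ is semi-abelian, then use Theorem \ref{Birkhoff} together with the fact that a Birkhoff subcategory of a semi-abelian category is semi-abelian. No issues.
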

\begin{proof}
	If $\mathscr{C}$ is a semi-abelian category, then $ \mathrm{Grpd}(\mathscr{C})$, and then $ \mathrm{Grpd}^2(\mathscr{C})$, are again semi-abelian \cite{Bourn-Gran}. Any Birkhoff subcategory of a semi-abelian category is itself semi-abelian, hence so is the category $ 2\text{-}\mathrm{Grpd}(\mathscr{C}) $, by Theorem \ref{Birkhoff}.
\end{proof}

From now on we shall always assume that $\mathscr{C}$ is semi-abelian. Let us then recall the notion of action representable category, of algebraically coherent category, and of category with normalizers.

A split extension in $ \mathscr{C} $ is a diagram 
\begin{equation}\label{split-extension}
	\begin{tikzcd}
		X \arrow[r, "\kappa"] &A \arrow[r, shift left=2, "\alpha"] &B \arrow[l, shift left=2, "\beta"]
	\end{tikzcd}
\end{equation}
in $ \mathscr{C} $, where $ \alpha\beta=1_B $ and $ \kappa $ is the kernel of $ \alpha $. A morphism of split extensions in $ \mathscr{C} $ is a diagram 
\begin{equation*}
	\begin{tikzcd}
		X\arrow[d, "u"'] \arrow[r, "\kappa"] &A \arrow[d, "v"] \arrow[r, shift left=2, "\alpha"] &B \arrow[d, "w"] \arrow[l, shift left=2, "\beta"]\\
		X' \arrow[r, "\kappa'"] &A' \arrow[r, shift left=2, "\alpha'"] &B \arrow[l, shift left=2, "\beta'"]
	\end{tikzcd}
\end{equation*}
in $ \mathscr{C} $, where the top row is the domain split extension, the bottom row is the codomain split extension, $ v\kappa=\kappa'u$, $ \alpha'v=w\alpha $ and $ v\beta=\beta'w$. Let us denote by $ \mathrm{SplExt}(\mathscr{C}) $ the category of split extensions in $ \mathscr{C} $ and by $ P: \mathrm{SplExt}(\mathscr{C})\to\mathscr{C} $ and $ K: \mathrm{SplExt}(\mathscr{C})\to\mathscr{C} $ the functors sending a split extension \eqref{split-extension} to $ B $ and to $ X $, respectively. The category $ \mathscr{C} $ is action representable \cite{BJK} if and only if each fiber of the functor $ K $ has a terminal object. This means that for any $ X $ in $ \mathscr{C} $ there is a split extension 
\begin{equation*}
	\begin{tikzcd}
		X \arrow[r, "k"] &
		\overline{X} \arrow[r, shift left=2, "p_1"] &{[}X{]}, \arrow[l, shift left=2, "i"]
	\end{tikzcd}
\end{equation*}
called the ``generic split extension with kernel $ X $'', such that for any split extension \eqref{split-extension} there is a unique morphism of split extensions
\begin{equation*}
	\begin{tikzcd}
		X \arrow[d, equal] \arrow[r, "\kappa"] &A \arrow[d, "v"] \arrow[r, shift left=2, "\alpha"] &B \arrow[d, "w"] \arrow[l, shift left=2, "\beta"]\\
		X \arrow[r, "k"] & \overline{X} \arrow[r, shift left=2, "p_1"] &{[}X{]}. \arrow[l, shift left=2, "i"]
	\end{tikzcd}
\end{equation*}
When a generic split extension with kernel $X$ as above exists, the object $[X]$ is often called the actor, or the split extension classifier, of $X$.
If $\mathscr{C}$ is the category of groups, the split extension classifier $ [G] $ of a group $G$ is given by its automorphism group $ \mathrm{Aut}(G) $. In the category of Lie algebras over a commutative ring, the  split extension classifier $ [L] $ of a Lie algebra $ L $ is given by its Lie algebra of derivations $ \mathrm{Der}(L) $.

A pointed protomodular category $ \mathscr{C} $ is algebraically coherent \cite{cigoli.gray.vanderlinden:2015} when, for each cospan of monomorphisms of split extensions 
\begin{equation*}
	\begin{tikzcd}
		X_1 \arrow[d, "u_1"'] \arrow[r, "\kappa_1"] &A_1 \arrow[d, "v_1"] \arrow[r, shift left=1, "\alpha_1"] &B \arrow[d, equal] \arrow[l, shift left=2, "\beta_1"]\\
		X \arrow[r, "\kappa"] &A \arrow[r, shift left=1, "\alpha"] &B \arrow[l, shift left=2, "\beta"]\\
		X_2 \arrow[u, "u_2"] \arrow[r, "\kappa_2"] &A_2 \arrow[u, "v_2"'] \arrow[r, shift left=1, "\alpha_2"] &B, \arrow[u, equal] \arrow[l, shift left=2, "\beta_2"]
	\end{tikzcd}
\end{equation*}
if the morphisms $ v_1 $ and $ v_2 $ are jointly strongly epimorphic in $ \mathscr{C} $, then so are the morphisms $ u_1 $ and $ u_2 $. 

A category $ \mathscr{C} $ has normalizers \cite{Gray} if for any monomorphism $ f: A\to B $ in $ \mathscr{C} $, the category with objects triples $ (N,n,m) $ where $ n $ is a normal monomorphism, $ m $ is a monomorphism and $ f=mn $, with expected morphisms, has a terminal object, the so-called normalizer of $ f $. 

We have seen before that $ 2\textrm{-Grpd}(\mathscr{C}) $ is a Birkhoff subcategory of $ \textrm{Grpd}^2(\mathscr{C}) $ whenever $ \mathscr{C} $ is a finitely cocomplete regular Mal'tsev category. In a more general context we have  the following

\begin{lemma}\label{prop:2-Grpd(C)coreflective}
	Let $ \mathscr{C} $ be a finitely complete category. Then $ 2\textrm{-Grpd}(\mathscr{C}) $ is a regular mono-coreflective subcategory of $ \mathrm{Grpd}^2(\mathscr{C}) $. 
\end{lemma}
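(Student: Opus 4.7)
The plan is to construct an explicit right adjoint $\mathsf{G}\colon \mathrm{Grpd}^2(\mathscr{C}) \to 2\text{-}\mathrm{Grpd}(\mathscr{C})$ to the inclusion $\mathsf{U}$ by sending each double groupoid $\mathbb{C}$ to the largest sub-double-groupoid whose bottom horizontal reflexive graph $C^0_1 \rightrightarrows C^0_0$ is discrete. Explicitly, I would set $(\mathsf{G}\mathbb{C})^0_0 := C^0_0$, $(\mathsf{G}\mathbb{C})^1_0 := C^1_0$, and $(\mathsf{G}\mathbb{C})^0_1 := C^0_0$, embedded in $C^0_1$ through the split monomorphism $e^0$. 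The only nontrivial piece $(\mathsf{G}\mathbb{C})^1_1 := G^1_1$ is defined as the pullback of $e^0 \times e^0 \colon C^0_0 \times C^0_0 \to C^0_1 \times C^0_1$ along $\langle d_1, c_1\rangle \colon C^1_1 \to C^0_1 \times C^0_1$, i.e., the subobject of those squares whose two vertical sides lie in the image of $e^0$. This construction requires only finite limits in $\mathscr{C}$.

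I would then check that every remaining structure map of $\mathbb{C}$ restricts to $\mathsf{G}(\mathbb{C})$, turning it into a 2-groupoid and making the componentwise inclusion $\varepsilon_\mathbb{C} \colon \mathsf{UG}(\mathbb{C}) \hookrightarrow \mathbb{C}$ into a double functor. The verifications are routine: for instance, $e^1$ factors through $G^1_1$ because $d_1 e^1 = e^0 d_0$ and $c_1 e^1 = e^0 c_0$ both land in the image of $e^0$, while horizontal and vertical composition and the two inversions preserve $G^1_1$ since the vertical sides of a resulting composite or inverse square are only acted on by projection, composition or reversal, all of which stay within the image of $e^0$. Naturality in $\mathbb{C}$ then follows from the universal properties used in the construction.

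For the universal property characterising $\mathsf{G}$ as right adjoint to $\mathsf{U}$, I would show that any double functor $f\colon \mathbb{D} \to \mathbb{C}$ from a 2-groupoid $\mathbb{D}$ factors uniquely through $\varepsilon_\mathbb{C}$. Since $e^0_\mathbb{D}$ is an isomorphism by hypothesis, the identity $f^0_1 e^0_\mathbb{D} = e^0_\mathbb{C} f^0_0$ forces $f^0_1$ to factor through $e^0_\mathbb{C}$; consequently $d_1 f^1_1 = f^0_1 d^\mathbb{D}_1$ and $c_1 f^1_1 = f^0_1 c^\mathbb{D}_1$ both factor through $e^0_\mathbb{C}$, and the universal property of the pullback defining $G^1_1$ yields the unique lift of $f^1_1$. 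Uniqueness of the global factorization is automatic, because every $\varepsilon^i_j$ is a monomorphism in $\mathscr{C}$.

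To finish, I would argue that the counit $\varepsilon_\mathbb{C}$ is a regular monomorphism. Componentwise this is immediate: $\varepsilon^0_0$ and $\varepsilon^1_0$ are identities; $\varepsilon^0_1 = e^0$ is split (by $d^0$) and so is the equalizer of $1_{C^0_1}$ and $e^0 d^0$; and $\varepsilon^1_1$ is a pullback of the regular mono $e^0 \times e^0$, hence again a regular mono since regular monos are pullback-stable in any finitely complete category. The step I expect to demand the most care is upgrading this componentwise equalizer data into a single equalizer presentation of $\varepsilon_\mathbb{C}$ as a parallel pair of double functors $\mathbb{C} \rightrightarrows \mathbb{E}$ inside $\mathrm{Grpd}^2(\mathscr{C})$. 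Since limits in $\mathrm{Grpd}^2(\mathscr{C})$ are computed levelwise, it should suffice to build a target double groupoid $\mathbb{E}$ whose $(0,1)$ and $(1,1)$ components carry the pair-of-vertical-sides data of $\mathbb{C}$ while the other two are kept trivial, and then verify that the induced two parallel double functors have the required levelwise equalizer; this is where the technical bookkeeping will concentrate.
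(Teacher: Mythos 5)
Your construction is essentially the paper's own proof: the coreflector is defined identically (keeping $C^0_0$ and $C^1_0$, replacing $C^0_1$ by $C^0_0$ via $e^0$, and taking the joint pullback of $\langle d_1,c_1\rangle$ along $e^0\times e^0$ for the top-left object), and the verification of the counit's universal property and of regular monomorphy proceeds the same way. The only difference is that you flag the step of assembling the componentwise regular monos into a regular mono of double groupoids, which the paper itself does not elaborate beyond noting that $(\varepsilon_C)^1_1$ is a regular monomorphism in $\mathscr{C}$.
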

\begin{proof}
	Let us describe the coreflection $ \mathsf{G}:\mathrm{Grpd}^2(\mathscr{C})\to 2\textrm{-Grpd}(\mathscr{C}) $. For a double groupoid $ \mathbb{C} $ in $ \mathscr{C} $, we define $ \mathsf{G}(\mathbb{C}) $ as $ \mathbb{F} $ in the following diagram: 
	\begin{equation}\label{coreflection}
		\begin{tikzcd}
			{C^1_1} && {C^1_0} \\
			\\
			{C^0_1} && {C^0_0} \\
			&& {F^1_1  } && {F^1_0 = C^1_0} \\
			\\
			&& {F^0_0 =C^0_0} && {F^0_0 =C^0_0.} 
			\arrow["{d^1}", shift left=2, from=1-1, to=1-3]
			\arrow["{c^1}"', shift right=2, from=1-1, to=1-3]
			\arrow["{e^1}"{description}, from=1-3, to=1-1]
			\arrow["{c_1}", shift left=2, from=1-1, to=3-1]
			\arrow["{d_1}"', shift right=2, from=1-1, to=3-1]
			\arrow["{e_1}"{description}, from=3-1, to=1-1]
			\arrow["{d^0}", shift left=2, from=3-1, to=3-3]
			\arrow["{c^0}"', shift right=2, from=3-1, to=3-3]
			\arrow["{e^0}"{description}, from=3-3, to=3-1]
			\arrow["{c_0}", shift left=2, from=1-3, to=3-3]
			\arrow["{d_0}"', shift right=2, from=1-3, to=3-3]
			\arrow["{e_0}"{description}, from=3-3, to=1-3]
			\arrow["{\delta^1}", shift left=2, from=4-3, to=4-5]
			\arrow["{\gamma^1}"', shift right=2, from=4-3, to=4-5]
			\arrow["{\gamma_1}", shift left=2, from=4-3, to=6-3]
			\arrow["{\delta_1}"', shift right=2, from=4-3, to=6-3]
			\arrow["1_{C^0_0}", shift left=2, from=6-3, to=6-5]
			\arrow["1_{C^0_0}"', shift right=2, from=6-3, to=6-5]
			\arrow["{\gamma_0}", shift left=2, from=4-5, to=6-5]
			\arrow["{\delta_0}"', shift right=2, from=4-5, to=6-5]
			\arrow["{\varepsilon^1}"{description}, from=4-5, to=4-3]
			\arrow["{\varepsilon_1}"{description}, from=6-3, to=4-3]
			\arrow["1_{C^0_0}"{description}, from=6-5, to=6-3]
			\arrow["{\varepsilon_0}"{description}, from=6-5, to=4-5]
			\arrow["{(\varepsilon_C)^1_1}", very near start, to=1-1, from=4-3]
			\arrow["{(\varepsilon_C)^1_0= 1_{C_0^1}}"', to=1-3, from=4-5]
			\arrow["{(\varepsilon_C)^0_1 = e^0}", to=3-1, from=6-3]
			\arrow["{(\varepsilon_C)^0_0 = 1_{C^0_0}}"',  very near end, to=3-3, from=6-5]
		\end{tikzcd}
	\end{equation}
	Here we set $ F^0_0:=C^0_0 $ and $ (\varepsilon_C)^0_0:=1_{C^0_0} $. Furthermore, we define $ F^1_0:=C^1_0 $, $\delta_0:=d_0 $, $\gamma_0:=c_0 $, $ \varepsilon_0:=e_0 $ and $ (\varepsilon_C)^1_0:=1_{C^1_0} $. Next we set $ (\varepsilon_C)^0_1:=e^0 $ and $ F^1_1 $, $ (\varepsilon_C)^1_1 $, $ \delta_1 $ and $ \gamma_1 $ to be the object and morphisms, respectively, in the ``joint pullback'' of $ d_1 $ and $ c_1 $ along $ e^0 $ as displayed in the following diagram: 
	\begin{equation*}
		\begin{tikzcd}
			{F^1_1} & {C^1_1} \\
			{C^0_0\times C^0_0} & {C^0_1\times C^0_1.}
			\arrow["{(\varepsilon_C)^1_1}", from=1-1, to=1-2]
			\arrow["{(\delta_1,\gamma_1)}"', from=1-1, to=2-1]
			\arrow["{e^0\times e^0}"', from=2-1, to=2-2]
			\arrow["{(d_1,c_1)}", from=1-2, to=2-2]
		\end{tikzcd}
	\end{equation*}
	Since $ e^0 \times e^0 $ is a split monomorphism in $\mathscr{C}$, $ (\varepsilon_C)^1_1 $ is a regular monomorphism in $\mathscr{C}$. The morphism $ \varepsilon_1 $ is the unique map such that $ (\delta_1,\gamma_1)\varepsilon_1= (1_{C^0_0},1_{C^0_0}) $ and $ (\varepsilon_C)^1_1\varepsilon_1=e_1e^0 $ induced by the universal property of the pullback $ F^1_1 $. We set $ \delta^1:=d^1(\varepsilon_C)^1_1 $ and $ \gamma^1:=c^1(\varepsilon_C)^1_1 $. Finally, $ \varepsilon^1 $ is the unique morphism such that $ (\delta_1,\gamma_1)\varepsilon^1=(d_0,c_0) $ and $ (\varepsilon_C)^1_1\varepsilon^1=e^1 $ induced by the universal property of the pullback $ F^1_1$. 
	
	It is easily seen that the front part of the diagram \eqref{coreflection} is a double reflexive graph $ \mathbb{F} $ in $ \mathscr{C} $, which is underlying a double groupoid structure. Furthermore, $ \varepsilon_C:\mathbb{F}\to\mathbb{C} $ is a double functor, and it is not difficult to prove that it has the universal property of the counit of the adjunction. 
\end{proof}

Our proof of the main result of this section also relies on the following:

\begin{proposition}\label{prop:actionrepresentablesubcategory}\cite[Proposition 2.9]{gran.gray:2021}
	Let $ \mathscr{C} $ be a semi-abelian action representable category. Moreover, let $ \mathscr{X} $ be a semi-abelian category  and $ \mathsf{H}:\mathscr{X}\to\mathscr{C} $ be a fully faithful left adjoint functor that is protoadditive \cite{proto}, i.e., it preserves kernels of split epimorphisms. Then $ \mathscr{X} $ is action representable.
\end{proposition}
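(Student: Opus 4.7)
The plan is to transport action representability from $\mathscr{C}$ through the adjunction using the right adjoint of $\mathsf{H}$. Write $\mathsf{R}\colon\mathscr{C}\to\mathscr{X}$ for this right adjoint, with unit $\eta$ (an isomorphism by full faithfulness of $\mathsf{H}$) and counit $\epsilon$. A first key observation is that both functors send split extensions to split extensions: for $\mathsf{H}$ this is exactly the content of the protoadditivity hypothesis (any functor preserves the splitting, and protoadditivity furnishes preservation of the kernel of the split epimorphism), while $\mathsf{R}$ preserves kernels as a right adjoint and again trivially preserves the splitting.

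Given $X\in\mathscr{X}$, I would apply $\mathsf{R}$ to the generic split extension
\begin{equation*}
	\mathsf{H}(X)\rightarrowtail \overline{\mathsf{H}(X)}\twoheadrightarrow [\mathsf{H}(X)]
\end{equation*}
provided by action representability of $\mathscr{C}$, and then identify $\mathsf{R}\mathsf{H}(X)$ with $X$ via $\eta_X$ to obtain a split extension
\begin{equation*}
	X\rightarrowtail \mathsf{R}(\overline{\mathsf{H}(X)})\twoheadrightarrow \mathsf{R}([\mathsf{H}(X)])
\end{equation*}
in $\mathscr{X}$. This would be my candidate for the generic split extension with kernel $X$, with actor $[X]_{\mathscr{X}}:=\mathsf{R}([\mathsf{H}(X)])$.

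To verify the universal property, given any split extension $(e)\colon X\rightarrowtail A\twoheadrightarrow B$ in $\mathscr{X}$, I would first apply $\mathsf{H}$ to obtain a split extension in $\mathscr{C}$, extract the unique classifying morphism of split extensions into $\mathsf{H}(X)\rightarrowtail \overline{\mathsf{H}(X)}\twoheadrightarrow [\mathsf{H}(X)]$, and then apply $\mathsf{R}$, precomposing with $\eta$ on the left column to land back in $\mathscr{X}$; this produces the desired morphism of split extensions from $(e)$ into the candidate generic one. Conversely, given any morphism of split extensions $(1_X,v,w)$ in $\mathscr{X}$ from $(e)$ into this candidate, I would apply $\mathsf{H}$ and postcompose $\mathsf{H}(v)$ and $\mathsf{H}(w)$ with $\epsilon$ in the middle and right columns; by naturality of $\epsilon$ together with the triangle identity $\epsilon_{\mathsf{H}(X)}\circ\mathsf{H}(\eta_X)=\mathrm{id}_{\mathsf{H}(X)}$, the resulting triple forms a morphism of split extensions in $\mathscr{C}$ into the generic one. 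Uniqueness in $\mathscr{C}$ then pins $\epsilon\mathsf{H}(v)$ and $\epsilon\mathsf{H}(w)$ down, after which faithfulness of $\mathsf{H}$ uniquely determines $v$ and $w$.

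The main technical obstacle I anticipate is the commuting-square bookkeeping: checking that the three compatibility squares (kernel, projection, section) that encode ``morphism of split extensions'' are genuinely preserved under each of the operations $\mathsf{H}$, $\mathsf{R}$, pre- or post-composition with $\eta$ and $\epsilon$. Conceptually this is only a naturality and triangle-identity chase, but it needs to be laid out cleanly enough that the bijection between split extensions in $\mathscr{X}$ with kernel $X$ and morphisms $B\to[X]_{\mathscr{X}}$ is visibly natural in $B$, which is what ultimately yields the action representability of $\mathscr{X}$.
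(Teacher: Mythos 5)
The paper does not actually prove this proposition; it imports it verbatim from \cite[Proposition 2.9]{gran.gray:2021}. Your argument is nonetheless the standard one and is essentially correct: $\mathsf{H}$ preserves split extensions by protoadditivity, the right adjoint $\mathsf{R}$ preserves them because it preserves limits (hence kernels and the zero object) and every functor preserves split epimorphisms, and transporting the generic split extension of $\mathsf{H}(X)$ along $\mathsf{R}$, with $\mathsf{R}\mathsf{H}(X)$ identified with $X$ via the unit isomorphism, yields the candidate actor $\mathsf{R}([\mathsf{H}(X)])$; existence and uniqueness of classifying morphisms then follow by transposing across the adjunction. The one point that needs repair is the final step of your uniqueness argument: from the fact that $\epsilon_{\overline{\mathsf{H}(X)}}\circ\mathsf{H}(v)$ and $\epsilon_{[\mathsf{H}(X)]}\circ\mathsf{H}(w)$ are pinned down you conclude that $v$ and $w$ are determined ``by faithfulness of $\mathsf{H}$''. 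Faithfulness only lets you cancel $\mathsf{H}$ itself, not the post-composition with the counit, and $\epsilon$ need not be a monomorphism even when $\mathsf{H}$ is fully faithful (only the unit is invertible). The correct, and equally immediate, justification is that $v\mapsto\epsilon_{C}\circ\mathsf{H}(v)$ is precisely the adjunction bijection $\mathscr{X}(A,\mathsf{R}(C))\cong\mathscr{C}(\mathsf{H}(A),C)$, hence injective; the same remark disposes of $w$. With that substitution the proof is complete and agrees with the argument in the cited source.
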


\begin{theorem}\label{main-AR}
	Let $ \mathscr{C} $ be a semi-abelian action representable algebraically coherent category with normalizers. Then $ 2\textrm{-Grpd}(\mathscr{C}) $ is a semi-abelian action representable algebraically coherent category with normalizers. 
\end{theorem}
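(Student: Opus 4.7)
The plan is to reduce everything to the ambient category $\mathrm{Grpd}^2(\mathscr{C})$ by transferring along the inclusion $\mathsf{U}\colon 2\textrm{-Grpd}(\mathscr{C})\to\mathrm{Grpd}^2(\mathscr{C})$, which is simultaneously a right adjoint (to the reflector $\mathsf{F}$ of Theorem~\ref{Birkhoff}) and a left adjoint (to the coreflector $\mathsf{G}$ of Lemma~\ref{prop:2-Grpd(C)coreflective}). Being biadjoint and fully faithful, $\mathsf{U}$ preserves both limits and colimits, and it reflects them as well. As a first step, iterating Theorem~\ref{thm:Grpd(C)actionrepresentable} first for $\mathscr{C}$ and then for $\mathrm{Grpd}(\mathscr{C})$ shows that $\mathrm{Grpd}^2(\mathscr{C})=\mathrm{Grpd}(\mathrm{Grpd}(\mathscr{C}))$ is semi-abelian, action representable, algebraically coherent, and has normalizers. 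Semi-abelianness of $2\textrm{-Grpd}(\mathscr{C})$ was already established in the corollary at the beginning of this section.

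For action representability I would apply Proposition~\ref{prop:actionrepresentablesubcategory} to the functor $\mathsf{U}$: it is fully faithful, a left adjoint by Lemma~\ref{prop:2-Grpd(C)coreflective}, and protoadditive since, being also a right adjoint, it preserves all limits and in particular kernels of split epimorphisms. The hypotheses of the proposition are then fulfilled, and action representability of $2\textrm{-Grpd}(\mathscr{C})$ follows from that of $\mathrm{Grpd}^2(\mathscr{C})$.

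For normalizers and algebraic coherence I would argue by direct transfer. Given a monomorphism $m\colon\mathbb{A}\to\mathbb{B}$ in $2\textrm{-Grpd}(\mathscr{C})$, the normalizer $N\rightarrowtail\mathsf{U}(\mathbb{B})$ of $\mathsf{U}(m)$ in $\mathrm{Grpd}^2(\mathscr{C})$ lies in $2\textrm{-Grpd}(\mathscr{C})$ because the latter is a Birkhoff subcategory of $\mathrm{Grpd}^2(\mathscr{C})$ (Theorem~\ref{Birkhoff}) and hence closed under subobjects; the normalizer universal property then transports back along $\mathsf{U}$, using that $\mathsf{U}$ preserves and reflects kernels (so being a normal monomorphism is the same on both sides) and that it is fully faithful (so the required factorization arrow is unique). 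For algebraic coherence, a cospan of monomorphisms of split extensions in $2\textrm{-Grpd}(\mathscr{C})$ is sent by $\mathsf{U}$ to one of the same shape in $\mathrm{Grpd}^2(\mathscr{C})$; joint strong epimorphy is preserved and reflected because $\mathsf{U}$ preserves binary coproducts and regular epimorphisms as a left adjoint, while also reflecting them thanks to full faithfulness and the preservation of kernel pairs. Applying algebraic coherence in $\mathrm{Grpd}^2(\mathscr{C})$ and pulling the conclusion back along $\mathsf{U}$ yields algebraic coherence in $2\textrm{-Grpd}(\mathscr{C})$.

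The main obstacle I anticipate is the careful verification that each categorical notion appearing in the definitions -- normal monomorphism, split extension, joint strong epimorphism -- is genuinely preserved and reflected by $\mathsf{U}$. In each case this ultimately reduces to $\mathsf{U}$ preserving the relevant (co)limit, which is automatic from biadjointness, but for the algebraic coherence step one must additionally identify joint strong epimorphisms with regular epimorphisms out of a binary coproduct and then combine the preservation of coproducts (by the left adjoint $\mathsf{U}\dashv\mathsf{G}$) with the preservation of regular epimorphisms. Once these bookkeeping points are settled, the theorem follows by combining these transfer arguments with the biadjoint action representability input of Proposition~\ref{prop:actionrepresentablesubcategory}.
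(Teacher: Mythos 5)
Your proposal is correct and follows essentially the same route as the paper: both deduce the properties of $\mathrm{Grpd}^2(\mathscr{C})$ by iterating Theorem~\ref{thm:Grpd(C)actionrepresentable}, use the fact that $2\textrm{-Grpd}(\mathscr{C})$ is a coreflective Birkhoff subcategory, and obtain action representability from Proposition~\ref{prop:actionrepresentablesubcategory} with protoadditivity of the inclusion coming from its being a right adjoint. The only difference is cosmetic: where you spell out the transfer of algebraic coherence and normalizers by hand, the paper cites Proposition~3.7 of the algebraic coherence reference for the former and leaves the latter as an observation.
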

\begin{proof}
	By Theorem~\ref{thm:Grpd(C)actionrepresentable}, we know that $ \mathrm{Grpd}^2(\mathscr{C}) $ is a semi-abelian action representable algebraically coherent category with normalizers. By Theorem~\ref{Birkhoff} and Lemma \ref{prop:2-Grpd(C)coreflective}, $ 2\textrm{-Grpd}(\mathscr{C}) $ is a coreflective Birkhoff subcategory of $ \mathrm{Grpd}^2(\mathscr{C}) $. We have already observed that $ 2\textrm{-Grpd}(\mathscr{C}) $ is a semi-abelian category. It follows from \cite[Proposition 3.7]{cigoli.gray.vanderlinden:2015} that it is also algebraically coherent. By using the fact that $\textrm{Grpd}^2(\mathscr{C}) $ has normalizers one sees that the same is true for $2\textrm{-Grpd}(\mathscr{C}). $
	Proposition~\ref{prop:actionrepresentablesubcategory} shows that $ 2\textrm{-Grpd}(\mathscr{C}) $ is action representable,  since the inclusion functor $ 2\textrm{-Grpd}(\mathscr{C}) \rightarrow  \mathrm{Grpd}^2(\mathscr{C}) $ is clearly protoadditive (since it has a left adjoint).
\end{proof}

\section*{Statements and Declarations}

The authors have no conflicts of interest to declare that are relevant to the content of this article.

\end{document}